\newtheorem{defn}{Definition}
\newtheorem{thm}{Theorem}
\newtheorem{prop}{Proposition}
\newtheorem{lem}{Lemma}
\newtheorem{cor}{Corollary}
\newtheorem{ex}{Example}
\newcommand{\mcl}[1]{\mathcal{ #1}}
\newcommand{\mbf}[1]{\mathbf{ #1}}
\newcommand{\R}{\mathbb{R}}
\newcommand{\N}{\mathbb{N}}
\newcommand{\eps}{\varepsilon}
\title{\LARGE \bf
 Relaxing The Hamilton Jacobi Bellman Equation To Construct Inner And Outer Bounds On Reachable Sets
}
\author{Morgan Jones%
	\thanks{M. Jones is with the School for the Engineering of Matter, Transport and Energy, Arizona State University, Tempe, AZ, 85298 USA. e-mail: {\tt \small morgan.c.jones@asu.edu } },
	Matthew M. Peet
	\thanks{M. Peet is with the School for the Engineering of Matter, Transport and Energy, Arizona State University, Tempe, AZ, 85298 USA. e-mail: {\tt \small mpeet@asu.edu } }
}
\begin{document}

		\maketitle
		\thispagestyle{plain}
		\pagestyle{plain}
\begin{abstract}
We consider the problem of overbounding and underbounding both the backward and forward reachable set for a given polynomial vector field, nonlinear in both state and input, with a given semialgebriac set of initial conditions and with inputs constrained pointwise to lie in a semialgebraic set. Specifically, we represent the forward reachable set using the ``value function'' which gives the optimal cost to go of an optimal control problems and if smooth satisfies the Hamilton-Jacobi-Bellman PDE. We then show that there exist polynomial upper and lower bounds to this value function and furthermore, these polynomial ``sub-value'' and ``super-value'' functions provide provable upper and lower bounds to the forward reachable set. Finally, by minimizing the distance between these ``sub-value'' and ``super-value'' functions in the $L_1$-norm, we are able to construct inner and outer bounds for the reachable set and show numerically on several examples that for relatively small degree, the Hausdorff distance between these bounds is negligible.
\end{abstract}

\section{Introduction}

The reachable set of an ODE is the set of coordinates that can be reached by the solution map, defined in Assumption \ref{ass}, at some fixed time and starting in some set of initial conditions. The computation of reachable sets is important for certifying solution maps remain in ``safety regions''; regions of the state space that are deemed to have low risks of system failure. Historic examples of solution maps transitioning outside ``safe regions'' include: two of the four reaction wheels on the Kepler Space telescope failing, analyzed in \cite{kampmeier2018reaction}; and the disturbing lateral vibrations of the Millennium footbridge over the River Thames in London on opening day, analyzed in \cite{chen2018study} and \cite{eckhardt2007modeling}.

In this paper we show the reachable set of an ODE, subject to pointwise bounded inputs, is the sublevel set of the ``value function" (optimal cost to go function) associated with a one player optimal control problem. This result can be thought of as the analogous result to \cite{mitchell2005time}; where it was shown the reachable set of an ODE, subjected to two sets of adversarially opposed input parameters, is the sublevel set of the ``value function" associated with a two player optimal control problem.

It is known that if the ``value function" of a one player optimal control problem is smooth then it satisfies the Hamilton Jacobi Bellman (HJB) Partial Differential Equation (PDE) \cite{bertsekas2005dynamic}. In this paper we show that relaxing the HJB PDE to a dissipation inequality allows for the construction of upper and lower bounds of the "value function''; we call super-value and sub-value functions respectively. We futhermore give sufficient conditions for the existence of polynomial super-value and sub-value functions. Moreover, it is shown that the sublevel set of sub-value and super-value functions construct provable upper and under bounds of reachable sets respectively.

The HJB PDE may not always have a solution in the classical sense. A generalized solution concept, called the viscosity solution, was developed in \cite{crandall1992user}. Discretization methods, such as those in \cite{wang2017adaptive} \cite{wan2019alternating}, are typically used to approximate the viscosity solution. However, such methods cannot guarantee that the approximate viscosity solution is an upper or lower bound to the true "value function''. Alternatively, we propose a Sum-of-Squares (SOS) optimization problem that is solved by the polynomial sub-value and super-value functions with minimum $L_1$ distance. 

Our approach to finding sub- and super-solutions to the HJB PDE is similar to \cite{summers2013approximate} and \cite{leong2016linearly}. In \cite{summers2013approximate} SOS was used to find a sub-value function for optimal control problems with discrete-time dynamics; whereas we consider continuous-time dynamics. In \cite{leong2016linearly} SOS was used to find sub-value and super-value functions for optimal control problems with quadratic costs and continuous-time synamics governed by ODE's affine in the input variable. Our approach allows us to construct sub-value and super-value functions for more general optimal control problems with polynomial costs and continuous time varying processes governed by ODE's nonlinear in the input variable. Moreover, we give sufficient conditions on the existence of polynomial sub-value and super-value functions and show how these functions can be used for reachable set estimation.

We numerically demonstrate that solving our proposed SOS optimization problem can give tight approximations of reachable sets. Unlike alternative approaches to reachable set analysis, \cite{mitchell2005time} \cite{greenstreet1999reachability} \cite{maidens2015reachability}, our reachable set approximations can be proved to overbound or underbound the reachable set.

An alternative approach to reachable set approximation is found in \cite{summers2013quantitative} \cite{yin2018reachability} \cite{yin2018finite} \cite{jones2019using} where dissipation like inequalities are solved using SOS programing to find a function whose sublevel set contains the reachable set. It is shown in this paper such dissipation inequalities are actually relaxations of the HJB PDE and thus solved by sub-value functions. In this context, our sufficient conditions for the existence of polynomial sub-value functions for optimal control problems can be viewed as feasibility conditions for the SOS optimization problems found \cite{summers2013quantitative} \cite{yin2018reachability} \cite{yin2018finite} \cite{jones2019using}.

The paper is organized as follows. Background material on ODE's is given in Section \ref{sec: Backround ODE}. In Section \ref{Section: finite time} optimal control theory is presented. In Section \ref{sec: reachable sets descibred by value} we construct an optimal control problem with value function that can characterize the reachable set exactly. In Section \ref{Section: Dissipation inequalities for sub-value functions} we show how relaxing the HJB PDE allows us to derive dissipation inequalities that are solved by sub-value and super-vale functions. In Section \ref{sec: SOS to solve optimal control} an SOS optimization is proposed that minimizes the $L_1$ norm of the distance between the sub-value and super-value function. The conclusion is given in Section \ref{sec: conclusion}.

\section{Notation} \label{sec: Notation}
We denote a ball with radius $R>0$ centered at the origin by $B_R=\{x \in \R^n: x^Tx< R^2\}$. For $x \in \R^n$ we denote $||x||_\infty= \max_{1 \le i \le n}|x_i|$. For short hand we denote the partial derivative $D^\alpha f(x):= \Pi_{i=1}^{n} \frac{\partial^{\alpha_i} f}{\partial x_i^{\alpha_i}} (x)$ for $\alpha \in \N^n$. Let $C(\Omega)$ be the Banach space of scalar continuous functions with domain $\Omega \subset \R^n$. For $f \in C(\Omega)$ we define the norms $||f||_{\infty, \Omega}:= \sup_{x \in \Omega}||f(x)||_{\infty}$ and $||f||_{1, \Omega}=\int_{\Omega}|f(x)|dx$. We denote the set of differentiable functions by $C^i(\Omega):=\{f \in C(\omega): D^\alpha f \in C(\Omega) \text{ } \forall \alpha \in \N^n \text{ such that } \sum_{j=1}^{n} \alpha_j \le i\}$. For $V \in C^1(\R^n \times \R)$ we denote $\nabla_x V:= (\frac{\partial V}{\partial x_1},....,\frac{\partial V}{\partial x_1})$ and $\nabla_t V=\frac{\partial V}{\partial x_{n+1}} $. For $d \in \N$ and $x \in \R^n$ we denote $z_d(x)$ to be the vector of monomial basis in $n$-dimensions with maximum degree $d \in \mathbb{N}$. We denote the space of scalar valued polynomials $p: \Omega \to \R$ with degree at most $d \in \N$ by $\mcl{P}_d[\Omega]$. We say $p \in \mcl{P}_d[\R^n]$ is Sum-of-Squares (SOS) if there exists $p_i \in \mcl{P}_d[\R^n]$ such that $p(x) = \sum_{i=1}^{k} (p_i(x))^2$. We denote $\sum_{SOS}$ to be the set of SOS polynomials.

\section{Background: Differential Equations} \label{sec: Backround ODE}
We consider nonlinear Ordinary Differential Equations (ODE's) of the form
\begin{equation} \label{eqn: ODE}
\dot{x}(t) = f(x(t), \mbf u(t)), \quad \mbf u(t) \in Y, \quad x(0)\in X_0\in \R^n,
\end{equation}
where $f:\R^n \times \R^{m} \to \R^n$; $u: \R \to \R^{m}$ is the input; and $Y \subset \R^{m} $ and $X_0 \subset \R^{n} $ are compact sets representing constraints on the inputs and initial conditions.


To define the solution map we define the set of pointwise-admissible input signals as
\[
U_Y:=\{\mbf u: \R \to \R^m: \mbf u(t) \in Y \text{ for all } t \in (-\infty,\infty) \}.
\]
  For a given set of admissible inputs, we constrain $f$, in the following definition, to admit a continuously-differentiable solution map.
\begin{defn}[Constraint on Admissibility of $f$] \label{ass} For given $Y \subset \R^m$ we say $f \in \mcl F_Y$ if
	\begin{enumerate}
		\item $f_i \in C^1(\R^n)$ for all $i \in \{1,...,n\}$.
		\item For any $T>0$, there exists a function $h: \R^n \times \R  \times U_Y  \to \R^n$, where for any $\mbf u \in U_Y$ we have $h_i(\cdot,\cdot, \mbf u) \in  C^1(\R^n \times [-T,T])$ for all $i \in \{1,...,n\}$, and
		\begin{align} \label{eqn: defining proprty of solution map}
		&\frac{\partial h(x,t,\mbf u)}{\partial t}= f(h(x,t,\mbf u),\mbf u(t))\\ \nonumber
		&h(x,0,\mbf u)=x,
		\end{align}
		for all $ x \in \R^n$, $t \in [-T,T]$ and $\mbf u \in U_Y$.
		\item The function $h$ that satisfies \eqref{eqn: defining proprty of solution map} is unique.
	\end{enumerate}
Since for each $f \in \mcl F_Y$ the associated function that satisfies \eqref{eqn: defining proprty of solution map} is unique we will denote this function by $\phi_f$ throughout the paper.
\end{defn}
\begin{lem} \label{lem: soln map properties}
	Let $Y \subset \R^m$ be a compact set, $f, -f\in \mcl F_Y$ and $T \in \R^+$.
	\begin{enumerate}[(A)]
		\item For $\mbf u \in U_Y$ define $\hat{\mbf u} (t)=\mbf u(-t)$, then $\forall x \in \R^n, t \in [-T,T]$	
		\begin{align} \label{eqn: reverse time solution map}
		& \phi_{-f}(x,-t,\hat{\mbf u})= \phi_f(x,t,\mbf u). \end{align}
		
		\item For $s \in [-T,T]$ and $\mbf u \in U_Y$ define $\tilde{ \mbf u}_s(t)=\mbf u(t +s)$, then $\forall x \in \R^n, t \in [-T-s,T-s] \cap [-T,T]$
		  \begin{align}
			& \phi_f(x,t+s,\mbf u)= \phi_f(\phi_f(x,s,\mbf u),t,\tilde{\mbf u}_s) \label{eqn: soln map semi group proerty}
		\end{align}
		
	\end{enumerate}	
	
\end{lem}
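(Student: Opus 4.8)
The plan is to prove both identities by the standard ``uniqueness of the solution map'' argument: in each case I would write down an explicit candidate function, verify that it is $C^1$ in $(x,t)$ on the relevant domain and that it satisfies the defining initial value problem \eqref{eqn: defining proprty of solution map} for the appropriate vector field and input signal, and then invoke item 3 of Definition \ref{ass} (uniqueness) to conclude that the candidate coincides with the solution map $\phi$.

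For part (A), first note that $\hat{\mbf u}\in U_Y$ since time reversal maps $Y$-valued signals on $\R$ to $Y$-valued signals on $\R$, so $\phi_{-f}(\cdot,\cdot,\hat{\mbf u})$ is well defined on $\R^n\times[-T,T]$ because $-f\in\mcl F_Y$. Define $g(x,t):=\phi_{-f}(x,-t,\hat{\mbf u})$. Then $g(x,0)=\phi_{-f}(x,0,\hat{\mbf u})=x$, and $g_i(\cdot,\cdot)\in C^1(\R^n\times[-T,T])$ because $t\mapsto -t$ is smooth and $\phi_{-f}$ has this regularity by Definition \ref{ass}. Differentiating in $t$ and using the chain rule together with \eqref{eqn: defining proprty of solution map} applied to the field $-f$ gives $\frac{\partial g}{\partial t}(x,t) = -\frac{\partial \phi_{-f}}{\partial t}(x,-t,\hat{\mbf u}) = -(-f)\bigl(\phi_{-f}(x,-t,\hat{\mbf u}),\hat{\mbf u}(-t)\bigr) = f\bigl(g(x,t),\mbf u(t)\bigr)$, since $\hat{\mbf u}(-t)=\mbf u(t)$. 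Hence $g$ solves the same initial value problem as $\phi_f(\cdot,\cdot,\mbf u)$ for the field $f$ and input $\mbf u$, and uniqueness yields $g=\phi_f(\cdot,\cdot,\mbf u)$, which is \eqref{eqn: reverse time solution map}.

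For part (B), observe that $\tilde{\mbf u}_s\in U_Y$ (a time shift again preserves $Y$-valued signals on $\R$), and that the domain restriction $t\in[-T-s,T-s]\cap[-T,T]$ guarantees both that $t+s\in[-T,T]$, so $\phi_f(x,t+s,\mbf u)$ is defined, and that $t\in[-T,T]$, so $\phi_f(\cdot,t,\tilde{\mbf u}_s)$ is defined. Fix $x$ and set $w(x,t):=\phi_f(x,t+s,\mbf u)$. Then $w(x,0)=\phi_f(x,s,\mbf u)$, the map $t\mapsto w(x,t)$ inherits $C^1$ regularity from $\phi_f$, and by \eqref{eqn: defining proprty of solution map}, $\frac{\partial w}{\partial t}(x,t) = \frac{\partial \phi_f}{\partial t}(x,t+s,\mbf u) = f\bigl(\phi_f(x,t+s,\mbf u),\mbf u(t+s)\bigr) = f\bigl(w(x,t),\tilde{\mbf u}_s(t)\bigr)$. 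Thus $t\mapsto w(x,t)$ solves the initial value problem defining $\phi_f(\cdot,\cdot,\tilde{\mbf u}_s)$ started at the point $\phi_f(x,s,\mbf u)$, and uniqueness gives $w(x,t)=\phi_f(\phi_f(x,s,\mbf u),t,\tilde{\mbf u}_s)$, establishing \eqref{eqn: soln map semi group proerty}.

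The computations themselves are routine chain-rule manipulations; the only points that require care are (i) checking that the candidate functions genuinely lie in the regularity class ($C^1$ on $\R^n\times[-T,T]$) demanded by Definition \ref{ass} before uniqueness can be invoked, and (ii) bookkeeping of the time domains so that every invocation of $\phi_f$ or $\phi_{-f}$ is on an interval where it is actually defined --- this is exactly why the intersection $[-T-s,T-s]\cap[-T,T]$ appears in part (B). I do not anticipate a substantive obstacle beyond this bookkeeping.
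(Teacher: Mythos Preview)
Your argument is correct and, for Part~(A), essentially identical to the paper's. For Part~(B) there is a minor variation worth noting: you time-shift the left-hand side, observe that $t\mapsto\phi_f(x,t+s,\mbf u)$ solves the ODE driven by $\tilde{\mbf u}_s$ with initial value $\phi_f(x,s,\mbf u)$, and then invoke uniqueness of the trajectory through that point. The paper instead glues together a piecewise candidate
\[
h_2^s(x,t,\mbf u)=\begin{cases}\phi_f(x,t,\mbf u),&-T\le t\le s,\\ \phi_f(\phi_f(x,s,\mbf u),t-s,\tilde{\mbf u}_s),&s<t\le T,\end{cases}
\]
so that $h_2^s(x,0,\mbf u)=x$ and the solution-\emph{map} uniqueness of Definition~\ref{ass} (item~3) applies verbatim. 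Your route is the textbook flow-property argument and is slightly cleaner; the paper's construction stays literally within the uniqueness statement as phrased in Definition~\ref{ass}, at the cost of having to check that the two pieces match in value and derivative at $t=s$. The trajectory-level uniqueness you use is, strictly speaking, a hair stronger than item~3 as written, but it follows immediately from $f\in C^1$ via Picard--Lindel\"of, so there is no real gap. Either approach is fine.
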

\begin{proof}
	\underline{\textbf{Proving \eqref{eqn: reverse time solution map} in Statement (A):}} As $-f \in \mcl F_Y$ we have for all $x \in \R^n$, $t \in [-T,T]$, and $\mbf u \in U_Y$
	\begin{align} \label{defining property of phi_-f}
	\frac{\partial \phi_{-f}(x,t,\mbf u)}{\partial t}= -f(\phi_{-f}(x,t,\mbf u), \mbf u(t)) \text{ and } \phi_{-f}(x,0,\mbf u)=x.
	\end{align}
	Now, letting $h_1(x,t, \mbf u)= \phi_{-f}(x,-t, \hat{\mbf u})$, for $x \in \R^n$, $t \in [-T,T]$, and $\mbf u \in U_Y$ the following holds
	\begin{align*}
	\frac{\partial h_1(x,t,\mbf u)}{\partial t} & = \frac{\partial \phi_{-f}(x,-t,\hat{\mbf u})}{\partial t}=-\frac{\partial \phi_{-f}(x,s,\hat{\mbf u})}{\partial s}\\
	& = f(\phi_{-f}(x,s,\hat{\mbf u}),\hat{\mbf u}(s)) = f(\phi_{-f}(x,-t,\hat{\mbf u}),\mbf u(t))\\
	& = f(h_1(x,t,{\mbf u}),\mbf u(t)),
	\end{align*}
	where to get the second equality we use $s=-t$, so $ds=-dt$; to get the third equality \eqref{defining property of phi_-f} was used; to get the fourth equality the substitution $s=-t$ was again applied, noting $\hat{ \mbf u}(-t)=\mbf u(t)$.
	Moreover, as $h_1(x,0,\mbf u)=\phi_{-f}(x,0,\hat{\mbf u})=x$, by \eqref{defining property of phi_-f}, it follows $h_1$ satisfies \eqref{eqn: defining proprty of solution map} and therefore, due to the uniqueness of $\phi_f$, \eqref{eqn: reverse time solution map} must follow.
	
	\underline{\textbf{Proving \eqref{eqn: soln map semi group proerty} in Statement (B):}} For fixed $s \in [-T,T]$ let us consider the following function
	\begin{align*}
	h_2^s(x,t, \mbf u) := \begin{cases}
	\phi_f(x,t, \mbf u) \text{ for } -T \le t \le s\\
	\phi_f(\phi_f(x,s, \mbf u),t-s,\tilde{\mbf u}_s) \text{ for } t \in (s,T].
	\end{cases}
	\end{align*}
	We prove \eqref{eqn: soln map semi group proerty} by showing $h_2$ satisfies \eqref{eqn: defining proprty of solution map} and using the uniqueness properties of $\phi_f$. Firstly it is clear $h_2^s(x,0, \mbf u)= \phi_f(x,0,\mbf u)=x$, and $h_2^s$ satisfies \eqref{eqn: defining proprty of solution map} for all $x \in \R^n$, $t \in [-T,s]$ and $\mbf u \in U_Y$. Now for all $x \in \R^n$, $t \in (s,T]$ and $\mbf u \in U_Y$
	\begin{align*}
	& \frac{\partial h_2^s(x,t,\mbf u)}{\partial t}  = \frac{\partial \phi_{f}(\phi_f(x,s,\mbf u),t-s,\tilde{\mbf u}_s)}{\partial t}  \\
	& = \frac{\partial \phi_{f}(\phi_f(x,s,\mbf u),k,\tilde{\mbf u}_s)}{\partial k} = f(\phi_{f}(\phi_f(x,s,\mbf u),k,\tilde{\mbf u}_s),\tilde{\mbf u}_s(k)) \\
	&= f(\phi_{f}(\phi_f(x,s,\mbf u),t-s,\tilde{\mbf u}_s),\tilde{\mbf u}_s(t-s)) = f(h_2(x,t,\mbf u), \mbf u (t)),
	\end{align*}
	where the second equality follows from using $k=t-s$ so $dk=dt$; the third equality follows by \eqref{eqn: defining proprty of solution map}; the fourth equality follows from applying $k=t-s$ again; the fifth equality follows as $\tilde{\mbf u}_s(t-s)= \mbf u(t)$.
	
	Thus by the uniqueness of $\phi_f$ it follows $\phi_f(x,t+s, \mbf u)=h_2(x,t+s, \mbf u)= \phi_f(\phi_f(x,s,\mbf u),t,\tilde{\mbf u}_s)$, therefore showing \eqref{eqn: soln map semi group proerty}.
	
\end{proof}

For a given $X_0 \subset \R^n$, $Y\subset \R^n$ and $f \in \mcl F_Y$, we next define the forward reachable set as follows.

\begin{defn} \label{defn: reachbale set}
	For $X_0 \subset \R^n$, $Y \subset \R^m$, $f\in \mcl F_Y$ and $S \subset \R^+$, let
	{ \begin{align*}
		FR_f(X_0,Y,S):= \{y \in \R^n \;:& \;\exists x \in X_0, \mbf u \in U_Y, \text{and } t \in S \\
		& \text{such that } \phi_f(x,t,\mbf u)=y  \}.
		\end{align*} } \normalsize
\end{defn}

In following sections, $S$ is of the form either $\{T\}$ or $[0,T]$.

\section{Finite Time Optimal Control Problems} \label{Section: finite time}

An optimal control problem with finite time horizon is a tuple $\{c,g,f,X_0,Y,T\}$ where   $c: \R^n \times \R \times \R \to \R$ is the running cost; $g: \R^n \to \R$ is the terminal cost; $f \in \mcl F_Y$; $X_0 \subset \R^n$ is the set of initial conditions; $Y \subset \R^m$ is a compact input set; and $T$ is the final time. For each optimal control problem we can next define the value function that intuitively describes the optimal "cost to go''.

\begin{defn}
	For given $X_0 \subset \R^n$; $Y \subset \R^m$; $T>0$;  $c: \R^n \times \R \times \R \to \R$; $g: \R^n \to \R$; $f \in \mcl F_Y$ we say $V^*: \R^n \times \R \to \R$ is a value function of the tuple $\{c,g,f,X_0,Y,T\}$ if for $(x,t) \in (FR_f(X_0,Y,\{t\}),t)$, where $t \in [0,T]$, the following holds
		\begin{align}
	&V^*(x,t)=\label{opt: optimal control general}\\
	&\inf_{\mbf u \in U_Y} \left\{ \int_{t}^{T} c(\phi_f(x,s-t,\mbf u),\mbf u(s),s) ds + g(\phi_f(x,T-t,\mbf u)) \right\}.\notag
	\end{align}
\end{defn}

A sufficient condition for $V^*$ to be a value function for the tuple $\{c,g,f,X_0,Y,T\}$ is for $V^*$ to satisfy the Hamilton Jacobi Bellman (HJB) PDE. 

\begin{prop} \label{prop: HJB eqn optimal}
		For given $X_0 \subset \R^n$, $Y \subset \R^m$, $g: \R^n \to \R$, $c:\R^n \times \R^m \times \R \to \R$, $f \in \mcl F_Y$, $T>0$, suppose there exists a differentiable function $V \in C^1(\R^n \times \R)$ such that the following holds for $(x,t) \in FR_f(X_0,Y,[0,T]) \times [0,T]$
		\begin{align}  \nonumber
		&\nabla_t V(x,t) + \inf_{u \in Y} \left\{ c(x,u,t) + \nabla_x V(x,t)^T f(x,u) \right\} = 0 \\  \label{eqn: general HJB PDE}
		& V(x,T)= g(x).
		\end{align}
 Then $V$ is the value function of the optimal control problem $\{c,g,f,X_0,Y,T\}$. 
\end{prop}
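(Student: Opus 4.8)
The plan is to establish, for each fixed $(x,t)$ with $x\in FR_f(X_0,Y,\{t\})$ and $t\in[0,T]$, that $V(x,t)$ is both $\le$ and $\ge$ the right-hand side of \eqref{opt: optimal control general}, via the classical verification-theorem argument. The observation driving everything is that the infimum in \eqref{eqn: general HJB PDE} equalling zero is the same as saying $\nabla_t V(x,t)+c(x,u,t)+\nabla_x V(x,t)^T f(x,u)\ge 0$ for every $u\in Y$, with this bound tight (up to $\eps$) at near-optimal $u$.

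\emph{Upper bound.} Fix $\mbf u\in U_Y$ and let $y(\cdot)$ be the state, as a function of absolute time $s\in[t,T]$, of the trajectory that passes through $x$ at time $t$; this is $\phi_f$ after the time-translation bookkeeping provided by Lemma~\ref{lem: soln map properties}(B), with $y(t)=x$ and $y(T)$ the terminal point appearing in \eqref{opt: optimal control general}. I first note that $y(s)\in FR_f(X_0,Y,\{s\})\subseteq FR_f(X_0,Y,[0,T])$ for all $s\in[t,T]$: writing $x=\phi_f(x_0,t,\bar{\mbf u})$ for some $x_0\in X_0$, $\bar{\mbf u}\in U_Y$, the concatenation of $\bar{\mbf u}$ (on $[0,t]$) with $\mbf u$ afterwards lies in $U_Y$, and the semigroup identity \eqref{eqn: soln map semi group proerty} together with uniqueness of $\phi_f$ identifies $y(s)$ with a point of the trajectory generated by that concatenated input from $x_0$. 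Hence \eqref{eqn: general HJB PDE} holds at $(y(s),s)$, and since $V\in C^1(\R^n\times\R)$ and $s\mapsto y(s)$ is $C^1$, the chain rule gives $\tfrac{d}{ds}V(y(s),s)=\nabla_t V(y(s),s)+\nabla_x V(y(s),s)^T f(y(s),\mbf u(s))\ge -c(y(s),\mbf u(s),s)$. Integrating on $[t,T]$ and using $V(\cdot,T)=g$ yields $V(x,t)\le\int_t^T c(y(s),\mbf u(s),s)\,ds+g(y(T))$; taking the infimum over $\mbf u\in U_Y$ gives the upper bound.

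\emph{Lower bound.} Fix $\eps>0$. By definition of the infimum in \eqref{eqn: general HJB PDE}, at every $(z,s)$ there is $u\in Y$ with $c(z,u,s)+\nabla_x V(z,s)^T f(z,u)\le-\nabla_t V(z,s)+\eps$; a measurable-selection argument produces an input $\mbf u^\eps\in U_Y$ realizing such a choice along its own trajectory $y^\eps$ through $x$ at time $t$. The same concatenation/semigroup reasoning shows $y^\eps(s)\in FR_f(X_0,Y,[0,T])$, so the chain-rule computation now runs with the reversed inequality, $\tfrac{d}{ds}V(y^\eps(s),s)\le-c(y^\eps(s),\mbf u^\eps(s),s)+\eps$, and integrating gives $V(x,t)\ge\int_t^T c(y^\eps(s),\mbf u^\eps(s),s)\,ds+g(y^\eps(T))-\eps(T-t)$, which is no smaller than the right-hand side of \eqref{opt: optimal control general} minus $\eps(T-t)$. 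Letting $\eps\downarrow0$ finishes the lower bound, and the two inequalities together prove that $V$ is the value function of $\{c,g,f,X_0,Y,T\}$.

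The chain-rule and integration steps are routine. The step I expect to be the main obstacle is the lower bound, and within it the construction of an admissible, measurable $\eps$-optimal input $\mbf u^\eps$ from the pointwise selection (a measurable-selection, Filippov-style argument), carried out so that the resulting trajectory exists on all of $[t,T]$ and stays inside $FR_f(X_0,Y,[0,T])$, which is exactly the region where \eqref{eqn: general HJB PDE} is available. The secondary point of care is keeping the time-shift bookkeeping consistent via Lemma~\ref{lem: soln map properties} so that the input appearing in the cost integrand is the same one driving $\phi_f$.
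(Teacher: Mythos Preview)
Your argument is correct and is precisely the classical verification-theorem proof; the paper does not give an independent argument here but simply cites Proposition~3.2.1 of Bertsekas, whose proof is exactly the two-inequality chain-rule computation you carry out. So you are supplying the details the paper defers to the reference, including the one genuine technical point (the measurable $\eps$-optimal selection for the lower bound), which you have correctly identified as the place where care is needed.
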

\begin{proof}
	Follows by Proposition 3.2.1 from \cite{bertsekas2005dynamic} where the domain of the value function is restricted to $(x,t) \in (FR_f(X_0,Y,\{t\}),t)$.
	\end{proof}

\begin{defn} \label{defn: sub and super value functions}
	We say the function $J: \R^n \times \R \to \R$ is a sub-value function to the finite time horizon optimal control problem $\{c,g,f,X_0,Y,T\}$ if we have
	\begin{align*}
	J(x,t) \le V^*(x,t) \quad \forall t \in [0,T] \text{ and } x \in FR_f(X_0,Y,\{t \}),
	\end{align*}
	where $V^*$ is the value function of $\{c,g,f,X_0,Y,T\}$. Moreover if instead $J$ satisfies
		\begin{align*}
	J(x,t) \ge V^*(x,t) \quad \forall t \in [0,T] \text{ and } x \in FR_f(X_0,Y,\{t \}),
	\end{align*}
	 we say $J$ is a super-value function to $\{c,g,f,X_0,Y,T\}$.
\end{defn}


\section{How Sublevel Sets Of Value Functions Can Describe Reachable Sets} \label{sec: reachable sets descibred by value}
In this section we construct a finite time horizon optimal control problem with associate value function whose sublevel sets can construct the reachable set of a system. We then show how the sublevel sets of the sub-value and super-value functions over- and under-bound the reachable set.


Analogous to Definition \ref{defn: reachbale set} we now define the backward reachable set and show how it is related to the forward reachable set in Lemma \ref{lem: backward and forward reach sets}.
\begin{defn} \label{defn: backward reachbale set}
	For $X_0 \subset \R^n$, $Y \subset \R^m$, $f\in \mcl F_Y$ and $S \subset \R^+$, let
	{ \begin{align*}
		BR_f(X_0,Y,S):= \{y \in \R^n \;:& \;\exists x \in X_0, \mbf u \in U_Y, \text{and } t \in S \\
		& \text{such that } \phi_f(y,t,\mbf u)=x  \}.
		\end{align*} } \normalsize
\end{defn}
In the next Lemma we give a relationship between the backward reachable set and forward reachable set. This relationship shows finding the set $FR_{f}(X_0,Y,\{T\})$ is equivalent to finding the set $BR_{-f}(X_0,Y,\{T\})$. Therefore for the rest of this paper we concentrate on developing methods to bound the backward reachable set. However, for numerical implementation we will change the sign of the vector field to allow for the calculation of forward reachable set bounds.
\begin{lem} \label{lem: backward and forward reach sets}
	Suppose $Y \subset \R^m$, $f\in \mcl F_Y$  is such that $-f\in \mcl F_Y$, and $T \in \R^+$. Then $FR_{-f}(X_0,Y,\{T\})=BR_f(X_0,Y,\{T\}).$
\end{lem}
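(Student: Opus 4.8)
The plan is to prove the two inclusions of the set identity separately, and in each direction to convert a trajectory of one vector field into a trajectory of the other by composing the time-reversal identity \eqref{eqn: reverse time solution map} with the semigroup identity \eqref{eqn: soln map semi group proerty} from Lemma \ref{lem: soln map properties}. Both of those identities are available here because the hypothesis is exactly $f, -f \in \mcl F_Y$, and since Definition \ref{ass} furnishes the solution map on $[-T',T']$ for every $T'>0$, the horizon in Lemma \ref{lem: soln map properties} can be taken large enough to cover $[-T,T]$.

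For the inclusion $FR_{-f}(X_0,Y,\{T\}) \subseteq BR_f(X_0,Y,\{T\})$, I would start from an arbitrary $y$ in the left-hand set, so that $\phi_{-f}(x,T,\mbf u)=y$ for some $x\in X_0$ and $\mbf u\in U_Y$. Rewriting this via \eqref{eqn: reverse time solution map} applied to the input $\hat{\mbf u}$ (and using $\hat{\hat{\mbf u}}=\mbf u$) gives $y=\phi_f(x,-T,\hat{\mbf u})$. Then I would feed this into \eqref{eqn: soln map semi group proerty} with $s=-T$ and $t=T$; since $\phi_f(x,0,\hat{\mbf u})=x$, the semigroup identity collapses to $x=\phi_f\big(\phi_f(x,-T,\hat{\mbf u}),T,\mbf v\big)=\phi_f(y,T,\mbf v)$, where $\mbf v$ is the time-shifted, time-reversed input $\mbf v(\tau)=\hat{\mbf u}(\tau-T)=\mbf u(T-\tau)$. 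Since $\mbf v\in U_Y$ and $x\in X_0$, this witnesses $y\in BR_f(X_0,Y,\{T\})$.

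The reverse inclusion $BR_f(X_0,Y,\{T\}) \subseteq FR_{-f}(X_0,Y,\{T\})$ should follow by the symmetric argument with the roles of $f$ and $-f$ swapped: starting from $\phi_f(y,T,\mbf u)=x$ with $x\in X_0$, rewrite it as $\phi_{-f}(y,-T,\hat{\mbf u})=x$ using \eqref{eqn: reverse time solution map}, then apply \eqref{eqn: soln map semi group proerty} for $-f$ with $s=-T$, $t=T$ to obtain $y=\phi_{-f}(x,T,\mbf v)$ with the same input $\mbf v(\tau)=\mbf u(T-\tau)\in U_Y$.

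I expect the substance of the proof to be light: the only bookkeeping is tracking which input gets reversed and shifted, and checking the resulting signal remains pointwise in $Y$ (immediate, since $\tau\mapsto T-\tau$ preserves membership in $Y$). The one place the argument could slip, and the point I would be most careful about, is ensuring the time arguments stay inside the intervals on which Lemma \ref{lem: soln map properties} is stated: for \eqref{eqn: soln map semi group proerty} with $s=-T$ the permitted set for $t$ is $[-T-s,T-s]\cap[-T,T]=[0,2T]\cap[-T,T]=[0,T]$, so the choice $t=T$ sits exactly on the boundary of the allowed range and is admissible.
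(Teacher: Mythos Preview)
Your proposal is correct and follows essentially the same approach as the paper: both inclusions are obtained by composing the time-reversal identity \eqref{eqn: reverse time solution map} with the semigroup identity \eqref{eqn: soln map semi group proerty}, and your input $\mbf v(\tau)=\mbf u(T-\tau)$ is exactly the paper's $\tilde{\mbf u}$. The only cosmetic difference is that in the reverse inclusion the paper converts back to $\phi_f$ and applies the semigroup for $f$, whereas you stay with $\phi_{-f}$ and apply the semigroup for $-f$; your explicit check that $t=T$ lies in the admissible range $[0,T]$ for \eqref{eqn: soln map semi group proerty} is a nice bit of rigor the paper leaves implicit.
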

\begin{proof}
We first show $FR_{-f}(X_0,Y,\{T\}) \subseteq BR_f(X_0,Y,\{T\})$. For $y \in FR_{-f}(X_0,Y,\{T\})$ there exists $x \in X_0$ and $ \mbf u \in U_Y$ such that
\begin{align} \label{one}
\phi_{-f}(x,T, \mbf u)=y.
\end{align}
If we denote $\hat{\mbf u}(t)=\mbf u(-t)$ and $\tilde{\mbf u}(t)= \hat{\mbf u}(t-T)$, it now follows
\begin{align} \nonumber
\phi_f(y,T,\tilde{\mbf u}) & =\phi_f(\phi_{-f}(x,T,\mbf u), T, \tilde{ \mbf u}) =   \phi_f(\phi_{f}(x,-T,\hat{\mbf u}), T, \tilde{ \mbf u})\\
& = \phi_f(x, T -T, \hat{ \mbf u})=x, \label{two}
\end{align}
where the first equality follows by \eqref{one}, the second equality by \eqref{eqn: reverse time solution map}, and the third equality follows by \eqref{eqn: soln map semi group proerty}. Thus we deduce from \eqref{two} $y \in BR_f(X_0,Y,\{T\})$.

We next show $BR_f(X_0,Y,\{T\}) \subseteq FR_{-f}(X_0,Y,\{T\})$. For $y \in BR_f(X_0,Y,\{T\})$ there exists $x \in X_0$ and $ \mbf u \in U_Y$ such that
\begin{align} \label{three}
\phi_f(y,T,\mbf u)=x.
\end{align}
Let us denote $\mbf w(t) = u(t+T)$, $\hat{\mbf w}(t) = \mbf w(-t)$ then it now follows
\begin{align*}
\phi_{-f}(x,T, \mbf w)& = \phi_{f}(x,-T, \hat{\mbf w})=\phi_f(\phi_f(y,T,\mbf u),-T,\hat{\mbf w})\\
& = \phi_f(y,T-T, \mbf u)=y,
\end{align*}
where the first equality follows \eqref{eqn: reverse time solution map}, the second equality by \eqref{three}, and the third equality by \eqref{eqn: soln map semi group proerty}. Thus we deduce $y \in FR_{-f}(X_0,Y,\{T\})$.
\end{proof}

\begin{thm} \label{thm: HJB to characterize reachable sets}
Given $Y \subset \R^m$, $f \in \mcl F_Y$ and $g: \R^n \to \R$, let $X_0= \{x \in \R^n: g(x) \le 1\}$ and $X \subset \R^n$ be such that $BR_f(X_0,Y,\{T\}) \subseteq X$. Now suppose $V^*: \R^n \times \R \to \R$ is a value function for $\{0,g,f,X,Y,T\}$, then
	\begin{equation} \label{forward reach is sublevel set}
	BR_f(X_0,Y,\{T\}) = \{x \in X : V^*(x,0) \le 1\}.
	\end{equation}
\end{thm}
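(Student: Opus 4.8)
The plan is to unfold both sides of \eqref{forward reach is sublevel set} into explicit statements about the solution map and then verify the two set inclusions. Because the running cost in the tuple $\{0,g,f,X,Y,T\}$ is identically zero, evaluating \eqref{opt: optimal control general} at $t=0$ gives, for every $x \in FR_f(X,Y,\{0\}) = X$,
\[
V^*(x,0) = \inf_{\mbf u \in U_Y} g\big(\phi_f(x,T,\mbf u)\big),
\]
so that the right-hand side of \eqref{forward reach is sublevel set} is $\{x \in X : \inf_{\mbf u \in U_Y} g(\phi_f(x,T,\mbf u)) \le 1\}$. On the left, substituting $X_0 = \{x : g(x) \le 1\}$ into Definition \ref{defn: backward reachbale set} identifies $BR_f(X_0,Y,\{T\})$ with $\{ y \in \R^n : \exists\, \mbf u \in U_Y \text{ such that } g(\phi_f(y,T,\mbf u)) \le 1 \}$.

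The inclusion ``$\subseteq$'' is then a direct unwinding: for $y \in BR_f(X_0,Y,\{T\})$, the hypothesis $BR_f(X_0,Y,\{T\}) \subseteq X$ gives $y \in X$, and the witnessing input $\mbf u$ satisfies $g(\phi_f(y,T,\mbf u)) \le 1$, whence $V^*(y,0) = \inf_{\mbf u \in U_Y} g(\phi_f(y,T,\mbf u)) \le 1$. For ``$\supseteq$'', fix $x \in X$ with $V^*(x,0) \le 1$; the task is to exhibit a single admissible input $\mbf u \in U_Y$ with $\phi_f(x,T,\mbf u) \in X_0$. If $V^*(x,0) < 1$, the definition of the infimum already supplies such a $\mbf u$, since a sufficiently near-optimal input has terminal cost strictly below $1$ and hence terminal state in $X_0$.

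The one delicate point, which I expect to be the main obstacle, is the boundary case $V^*(x,0) = 1$: here the infimum defining $V^*$ must actually be attained. I would dispatch this with a compactness argument. Since $f \in \mcl F_Y$, the trajectory $t \mapsto \phi_f(x,t,\mbf u)$ of \eqref{eqn: ODE} is $C^1$ on the compact interval $[0,T]$ for each admissible $\mbf u$, and one argues that the terminal states $\{\phi_f(x,T,\mbf u) : \mbf u \in U_Y\}$ form a bounded set; a minimizing sequence of inputs then has terminal states clustering at the terminal state of some admissible input $\mbf u^\star$, and by continuity of $g$ together with closedness of $X_0 = \{g \le 1\}$ that state lies in $X_0$. (Equivalently, one simply invokes existence of an optimal control for $\{0,g,f,X,Y,T\}$, within which setting the attained infimum yields $\mbf u^\star$ directly.) Either way $\phi_f(x,T,\mbf u^\star) \in X_0$ for some $\mbf u^\star \in U_Y$, so Definition \ref{defn: backward reachbale set} gives $x \in BR_f(X_0,Y,\{T\})$, establishing \eqref{forward reach is sublevel set}.
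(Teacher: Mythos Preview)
Your proof follows the same two-inclusion route as the paper's, and your handling of ``$\subseteq$'' matches it exactly. For ``$\supseteq$'' the paper simply sets $\mbf u_0 := \arg\inf_{\mbf u \in U_Y} g(\phi_f(y_0,T,\mbf u))$ without further comment, so the attainment issue you flag in the boundary case $V^*(x,0)=1$ is one the paper itself glosses over; your sketched compactness argument is therefore already more than the paper supplies, though making it fully rigorous would require additional hypotheses (e.g.\ closedness of the time-$T$ reachable set from a point, as in Filippov-type existence results).
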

\begin{proof}
	As $V^*$ is a value function to $\{0,g,f,X,Y,T\}$ it follows for all $t \in [0,T]$ and $x \in FR_f(X,Y,\{t\})$
	\begin{align} \label{Terminal value function}
	V^*(x,t)= \inf_{\mbf u \in U_Y} g(\phi_f(x,T-t,\mbf u)).
	\end{align}
	
	For $y_0 \in BR_f(X_0,Y,\{T\}) \subseteq X$ there exists $x_0 \in X_0$ and $\mbf u_0 \in U_Y$ such that $\phi_{f}(y_0,T,\mbf u_0)=x_0$. Thus it follows
	{\small \begin{align*}
	V^*(y_0,0) & = \inf_{ \mbf u \in U_Y} g(\phi_f(y_0,T, \mbf u)) \le g(\phi_f(y_0,T, \mbf u_0))= g(x_0) \le 1,
	\end{align*} }
where the first equality follows as $y_0 \in X$ so \eqref{Terminal value function} holds. Therefore $y_0 \in \{x \in X : V^*(x,0) \le 1\}$. Hence $BR_f(X_0,Y,\{T\}) \subseteq \{x \in X : V^*(x,0) \le 1\}$.
	
	Now suppose $y_0 \in \{x \in X : V^*(x,0) \le 1\}$. Then if $ \mbf u_0: = \arg \inf_{ \mbf u \in U_Y} g(\phi_f(y_0,T, \mbf u))$, let  $x_0:= \phi_f(y_0,T, \mbf u_0)$. It follows
	{\small \begin{align*}
	g(x_0)=g(\phi_f(y_0,T,\mbf u_0))=\inf_{\mbf u \in U_Y}g(\phi_f(y_0,T,\mbf u))=V^*(y_0,0) \le 1,
	\end{align*} }
	where the third equality follows because $y_0 \in X$ so \eqref{Terminal value function} holds. Hence $x_0 \in X_0$. Therefore $y_0 \in BR_f(X_0,Y,\{T\})$. Thus $\{x \in X : V^*(x,0) \le 1\} \subseteq BR_f(X_0,Y,\{T\})$.
\end{proof}


We next show how sub-value and super-value functions, defined in Definition \ref{defn: sub and super value functions}, can can outer bound and inner bound reachable sets.

\begin{lem} \label{lem: under and over approx of reachable set}
	Given $Y \subset \R^m$, $f \in \mcl F_Y$ and $g: \R^n \to \R$, let $X_0= \{x \in \R^n: g(x) \le 1\}$ and $X \subset \R^n$ be such that $BR_f(X_0,Y,\{T\}) \subseteq X$. Suppose $V_l$ and $V_u$ are sub-value and super-value functions to the optimal control problem $\{0,g,f,X,Y,T\}$. Then
	\begin{align} \label{set containments statement}
	 \{x \in X : V_u(x,0) \le 1\} &  \subseteq BR_{f}(X_0,Y,\{T\})\\ \nonumber
	 BR_{f}(X_0,Y,\{T\}) & \subseteq \{x \in X : V_l(x,0) \le 1\}.
	\end{align}
\end{lem}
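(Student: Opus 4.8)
The plan is to combine Theorem~\ref{thm: HJB to characterize reachable sets} with the defining inequalities of sub- and super-value functions from Definition~\ref{defn: sub and super value functions}. By Theorem~\ref{thm: HJB to characterize reachable sets} we already know the exact identity $BR_f(X_0,Y,\{T\}) = \{x \in X : V^*(x,0) \le 1\}$, where $V^*$ is the value function of $\{0,g,f,X,Y,T\}$. So both containments in \eqref{set containments statement} reduce to comparing sublevel sets of $V_l$, $V_u$, and $V^*$ at time $t=0$, and the point is simply that pointwise ordering of functions reverses inclusion of sublevel sets.

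First I would establish the second containment. Take any $x \in BR_f(X_0,Y,\{T\})$. Since $BR_f(X_0,Y,\{T\}) \subseteq X$ by hypothesis, we have $x \in X$, and by Theorem~\ref{thm: HJB to characterize reachable sets} we get $V^*(x,0) \le 1$. Now I need $x \in FR_f(X,Y,\{0\})$ so that the sub-value inequality $V_l(x,0) \le V^*(x,0)$ from Definition~\ref{defn: sub and super value functions} applies at $t=0$; this holds because $\phi_f(x,0,\mbf u) = x$ for any admissible $\mbf u$ and $x \in X$, so $x \in FR_f(X,Y,\{0\})$. Hence $V_l(x,0) \le V^*(x,0) \le 1$, i.e. $x \in \{x \in X : V_l(x,0) \le 1\}$.

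Next I would establish the first containment. Take any $x \in \{x \in X : V_u(x,0) \le 1\}$. As above $x \in X = FR_f(X,Y,\{0\})$, so the super-value inequality gives $V^*(x,0) \le V_u(x,0) \le 1$. Combined with $x \in X$, Theorem~\ref{thm: HJB to characterize reachable sets} (the reverse inclusion $\{x \in X : V^*(x,0) \le 1\} \subseteq BR_f(X_0,Y,\{T\})$) yields $x \in BR_f(X_0,Y,\{T\})$. This completes both inclusions.

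The only subtlety, and the step I would be most careful about, is making sure the sub/super-value inequalities are being invoked on the correct domain: Definition~\ref{defn: sub and super value functions} only guarantees $J(x,t) \le V^*(x,t)$ (resp. $\ge$) for $x \in FR_f(X,Y,\{t\})$, so I must verify that every point at which I use the inequality — namely points of $X$ evaluated at $t=0$ — indeed lies in $FR_f(X,Y,\{0\})$. This is immediate from $\phi_f(x,0,\mbf u)=x$, but it is the one place where a careless argument could slip. Everything else is the elementary observation that $a \le b \le 1 \implies a \le 1$ together with the exact characterization already proved in Theorem~\ref{thm: HJB to characterize reachable sets}, so there is no real obstacle beyond bookkeeping.
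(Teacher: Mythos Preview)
Your proposal is correct and follows essentially the same approach as the paper's proof: both combine the exact characterization $BR_f(X_0,Y,\{T\}) = \{x \in X : V^*(x,0) \le 1\}$ from Theorem~\ref{thm: HJB to characterize reachable sets} with the defining inequalities $V_l \le V^* \le V_u$ from Definition~\ref{defn: sub and super value functions} to compare sublevel sets. If anything, you are slightly more careful than the paper in explicitly verifying that $X = FR_f(X,Y,\{0\})$ so that the sub/super-value inequalities are valid at $t=0$ on all of $X$, a step the paper's proof leaves implicit.
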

\begin{proof}
	Since $V_l$ and $V_u$ are sub-value and super-value functions to the optimal control problem $\{0,g,f,X,Y,T\}$ it follows $\forall t \in [0,T] \text{ and } x \in FR_f(X,Y,\{t\})$
	\begin{align} \label{sub and super valu}
	V_l(x,t) \le V^*(x,t) \le V_u(x,t),
	\end{align}
	where $V^*$ is the value function to $\{0,g,f,X,Y,T\}$.
	
	By \eqref{sub and super valu} it follows
		\begin{align} \label{set containments}
	\{x \in X : V_u(x,0) \le 1\} & \subseteq \{x \in X : V^*(x,0) \le 1\}\\ \nonumber
	\{x \in X : V^*(x,0) \le 1\} & \subseteq \{x \in X : V_l(x,0) \le 1\}.
	\end{align}

	Moreover by Theorem \ref{thm: HJB to characterize reachable sets} we have
	\begin{align} \label{forward reach is sublevel}
	BR_{f}(X_0,Y,\{T\}) = \{x \in X : V^*(x,0) \le 1\}.
	\end{align}
Thus \eqref{set containments} together with \eqref{forward reach is sublevel} proves the set containments given in \eqref{set containments statement}.
\end{proof}

\section{Dissipation Inequalities For Sub-Value and Super-Value Functions} \label{Section: Dissipation inequalities for sub-value functions}
We now propose dissipation inequalities and show, using a novel proof, that if a differentiable function satisfies such inequalities then it must be a sub-value or super-value function associated with an optimal control problem. The dissipation inequalities are found by relaxing the HJB PDE to an inequality. A similar result is found  in Theorem 3.3, from \cite{crandall1992user}, for a class of PDE's that include the HJB PDE. However in \cite{crandall1992user} a futher property, the candidate sub-value function is less than or equal to the candidate super-value function on the boundary of some compact set, is required to hold before such functions can be verified as sub-value and super-value functions.



\begin{prop} \label{prop: diss ineq implies lower soln}
	For given $T>0$, compact $Y \subset \R^m$, $g \in C^1(\R^n)$, $c \in C^1(\R^n \times \R^m \times \R)$, $f \in \mcl F_Y$. Suppose $X_c \subseteq \R^n$ is such that $FR_f(X_0,Y,[0,T]) \subseteq X_c$ and $J \in C^1(\R^n \times \R)$ satisfies the following $\forall x \in X_c, u \in Y, t\in [0,T]$
	\begin{align} \label{ineq: diss ineq for sub sol of HJB}
	& \nabla_t J(x,t) + c(x,u,t) + \nabla_x J(x,t)^T f(x,u) \ge 0 \\ \label{ineq: BC}
	& J(x,T) \le g(x).
	\end{align}
	 Then $J$ is a sub-value function to the optimal control problem $\{c,g,f,X_0,Y,T\}$.
	
	Alternatively if $J$ satisfies the following $\forall x \in X_c, u \in Y, t\in [0,T]$
		\begin{align} \label{ineq: super diss ineq for sub sol of HJB}
	& \nabla_t J(x,t) + c(x,u,t) + \nabla_x J(x,t)^T f(x,u) \le 0 \\ \label{ineq: super BC}
	& J(x,T) \ge g(x),
	\end{align}
Then $J$ is a super-value function to $\{c,g,f,X_0,Y,T\}$.
\end{prop}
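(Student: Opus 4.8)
The plan is to show that the dissipation inequality \eqref{ineq: diss ineq for sub sol of HJB}, together with the terminal inequality \eqref{ineq: BC}, forces $J(x,t) \le V^*(x,t)$ for every $t \in [0,T]$ and $x \in FR_f(X_0,Y,\{t\})$, by comparing $J$ along an arbitrary trajectory with the running cost accumulated along that same trajectory. Fix such a pair $(x,t)$ and an arbitrary admissible input $\mbf u \in U_Y$. The key object is the scalar function $s \mapsto J(\phi_f(x,s-t,\mbf u),s)$ on $[t,T]$; I would differentiate it using the chain rule and the defining property \eqref{eqn: defining proprty of solution map} of the solution map, which gives $\frac{d}{ds} J(\phi_f(x,s-t,\mbf u),s) = \nabla_t J(\phi_f,\,s) + \nabla_x J(\phi_f,\,s)^T f(\phi_f,\mbf u(s))$. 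Since the trajectory point $\phi_f(x,s-t,\mbf u)$ lies in $X_c$ for all $s\in[t,T]$ (because $FR_f(X_0,Y,[0,T]) \subseteq X_c$, using the semigroup property from Lemma \ref{lem: soln map properties} to see the point is reachable), inequality \eqref{ineq: diss ineq for sub sol of HJB} applied with $u = \mbf u(s)$ yields $\frac{d}{ds} J(\phi_f(x,s-t,\mbf u),s) \ge -c(\phi_f(x,s-t,\mbf u),\mbf u(s),s)$.

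Next I would integrate this differential inequality over $s \in [t,T]$. The left side telescopes to $J(\phi_f(x,T-t,\mbf u),T) - J(x,t)$, and the right side is $-\int_t^T c(\phi_f(x,s-t,\mbf u),\mbf u(s),s)\,ds$. Rearranging and then using the terminal inequality \eqref{ineq: BC} to replace $J(\phi_f(x,T-t,\mbf u),T)$ by the larger quantity $g(\phi_f(x,T-t,\mbf u))$, I obtain
\begin{align*}
J(x,t) \le \int_t^T c(\phi_f(x,s-t,\mbf u),\mbf u(s),s)\,ds + g(\phi_f(x,T-t,\mbf u)).
\end{align*}
Since $\mbf u \in U_Y$ was arbitrary, taking the infimum over $\mbf u$ on the right-hand side gives exactly $J(x,t) \le V^*(x,t)$, which is the definition of a sub-value function. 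The super-value case is entirely symmetric: inequalities \eqref{ineq: super diss ineq for sub sol of HJB} and \eqref{ineq: super BC} reverse every comparison, the integrated inequality becomes $J(x,t) \ge \int_t^T c\,ds + g(\phi_f(x,T-t,\mbf u))$ for every $\mbf u$, and since this holds for all admissible inputs it holds in particular for any minimizing (or near-minimizing) sequence, so $J(x,t) \ge \inf_{\mbf u} \{\cdots\} = V^*(x,t)$.

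The main technical point to be careful about is the regularity needed to run the chain rule and the fundamental theorem of calculus: one needs $s \mapsto \phi_f(x,s-t,\mbf u)$ to be $C^1$ on $[t,T]$ (guaranteed by $f \in \mcl F_Y$ and the definition of $\phi_f$), $J \in C^1$ (assumed), $c$ continuous so the integrand is integrable (assumed $C^1$), and one must verify that $\mbf u(s)$ is a legitimate choice of the pointwise variable $u$ in \eqref{ineq: diss ineq for sub sol of HJB}, which holds since $\mbf u(s) \in Y$ for all $s$. A secondary subtlety is confirming every intermediate trajectory point stays inside $X_c$: the point $\phi_f(x,s-t,\mbf u)$ with $s-t \in [0,T-t] \subseteq [0,T]$ is reachable from $X_0$ at time $s-t$ (since $x$ itself is reachable at time $t$ and the semigroup/composition property \eqref{eqn: soln map semi group proerty} lets one concatenate), hence lies in $FR_f(X_0,Y,[0,T]) \subseteq X_c$. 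With these regularity and domain facts in place, the argument is a direct integration of the dissipation inequality, so I do not expect any serious obstacle beyond bookkeeping.
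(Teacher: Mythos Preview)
Your argument is correct and follows the classical ``integrate the dissipation inequality along trajectories'' route. The paper takes a genuinely different path: it defines $\tilde L(x,t):=\inf_{u\in Y}\bigl\{\nabla_t J(x,t)+c(x,u,t)+\nabla_x J(x,t)^T f(x,u)\bigr\}\ge 0$ and observes that $J$ then satisfies the HJB equation \emph{exactly} for the modified problem $\{\tilde c,\tilde g,f,X_0,Y,T\}$ with $\tilde c:=c-\tilde L$ and $\tilde g(x):=J(x,T)$. Invoking Proposition~\ref{prop: HJB eqn optimal}, $J$ is the value function of that modified problem, and since $\tilde c\le c$ and $\tilde g\le g$ one immediately gets $J\le V^*$. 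For the super-value half, the paper does not simply reverse signs along the trajectory as you do; instead it applies the sub-value result to $-J$ with data $(-c,-g)$, obtaining $-J(x,t)\le \inf_{\mbf u}\{-\int c-g\}$, and then uses $-\inf(-\,\cdot\,)=\sup(\cdot)\ge\inf(\cdot)$ to conclude $J\ge V^*$.

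The trade-offs: your approach is more elementary and self-contained, needing only the chain rule and the fundamental theorem of calculus, and it treats the sub- and super-value cases symmetrically. The paper's approach is less direct but makes explicit the conceptual point that a sub-solution of the HJB inequality is an \emph{exact} solution of an HJB equation with a smaller running cost, tying the result back to Proposition~\ref{prop: HJB eqn optimal}; this is what the authors advertise as the ``novel proof.'' Both arguments rely on the same domain bookkeeping (the trajectory stays in $X_c$ via $FR_f(X_0,Y,[0,T])\subseteq X_c$), and neither needs any additional hypothesis, so the two are logically equivalent in strength.
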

\begin{proof}
	Let us denote the left hand side of Inequality \eqref{ineq: diss ineq for sub sol of HJB} by,
	\begin{align*}
	L(x,t,u)= \nabla_t J(x,t) + c(x,u,t) + \nabla_x J(x,t)^T f(x,u).
	\end{align*}
	As $Y$ is compact and the functions $c$ and $f$ are both differentiable we may define $\tilde{L}(x,t):=\inf_{u \in Y} L(x,t,u)$. Moreover we deduce from Inequality \eqref{ineq: diss ineq for sub sol of HJB} that $\tilde{L}(x,t) \ge 0$ for all $x \in X_c$ and $t\in[0,T]$. Now from the construction of the function $\tilde{L}$ it is clear $J$ satisfies the following equation for any $x \in X_c, u \in Y, t\in [0,T]$
	\begin{align} \label{1}
& \nabla_t J (x,t)+ \inf_{u \in Y} \left\{  c(x,u,t) -\tilde{L}(x,t) + \nabla_x J(x,t)^T f(x,u) \right\} =0.
\end{align}
	If we consider the optimal control problem $\{\tilde{c},\tilde{g},f,X_0,Y,T\}$,  where $\tilde{c}(x,u,t)=c(x,u,t)-\tilde{L}(x,t)$ and $\tilde{g}(x)=J(x,T)$, as \eqref{1} holds $\forall x \in FR_f(X_0,Y,[0,T]) \subseteq X_c$ and $t \in [0,T]$ it follows by Proposition \ref{prop: HJB eqn optimal} $J$ is a value function for $\{\tilde{c},\tilde{g},f,X_0,Y,T\}$. It now follows for any $t \in [0,T]$ and $x \in FR_f(X_0,Y,\{t\})$ we have

{\small	\begin{align} \nonumber
	&	{J}(x,t) = \inf_{\mbf u \in U_Y} \bigg\{ \int_{t}^{T}  \tilde{c}(\phi_{f}(x,s-t, \mbf u), \mbf u(s),s) ds+ \tilde{g}(\phi_{f}(x,T-t, \mbf u) \bigg\} \\ \nonumber
	& =\inf_{\mbf u \in U_Y} \bigg\{ \int_{t}^{T}  c(\phi_{f}(x,s-t, \mbf u), \mbf u(s),s) -\tilde{L}(\phi_{f}(x,s-t, \mbf u),s) ds \\ \nonumber
		& \hspace{2cm}+ {J}((\phi_{f}(x,T-t, \mbf u),T) \bigg\} \\ \nonumber
		& \le \inf_{\mbf u \in U_Y} \left\{ \int_{t}^{T} c(\phi_{f}(x,s-t, \mbf u), \mbf u(s),s) ds + g(\phi_{f}(x,T-t, \mbf u) \right\} \\ \label{proof J is sub value}
		& = V^*(x,t),
	\end{align} }
	where $V^*$ is a value function of $\{c,g,f,X_0,Y,T\}$, and the inequality follows from the fact $\tilde{L}(x,t) \ge 0$ for all $x \in X_c$ and $t \in [0,T]$, thus implying $\tilde{L}(\phi_f(x,s-t,\mbf u),s) \ge 0$ for all $x \in FR_f(x_0,Y,\{t\})$ and $s \in [t,T]$; and the fact $J(x,T) \le g(x)$ for all $x \in X_c$, thus implying ${J}((\phi_f(x,T-t, \mbf u),T) \le g(\phi_f(x,T-t, \mbf u))$ for any $x \in FR_f(x_0,Y,\{t\})$. Therefore it is clear from \eqref{proof J is sub value} that $J$ is a sub-value function to $\{c,g,f,X_0,Y,T\}$.
	
	We now prove if the Inequalities \eqref{ineq: super diss ineq for sub sol of HJB} and \eqref{ineq: super BC} hold then $J$ is a super-value function to $\{c,g,f,X_0,Y,T\}$. Multiplying both sides of the inequalities \eqref{ineq: super diss ineq for sub sol of HJB} and \eqref{ineq: super BC}  by $-1$ we get $\forall x \in X_c, u \in Y, t\in [0,T] $
		\begin{align} \nonumber
	& \nabla_t (-J)(x,t) - c(x,u,t) + \nabla_x (-J)(x,t)^T f(x,u) \ge 0 \\ \nonumber	
	& -J(x,T) \le -g(x).
	\end{align}
	Using the previous part of the proof we deduce $-J$ is a sub solution to $\{-c,-g,f,X_0,Y,T\}$. Thus for any $t \in [0,T]$ and $x \in FR_f(X_0,Y,\{t\})$
		\small{\begin{align*}
	-J(x,t) \le \inf_{ \mbf u \in U_Y} \left\{ \int_{t}^{T} -c(\phi_f(x,s, \mbf u), \mbf u(s),s) ds - g(\phi_f(x,T, \mbf u)) \right\}.
	\end{align*} } \normalsize
	By multiplying both sides of the above inequality by $-1$ we deduce for any $t \in [0,T]$ and $x \in FR_f(X_0,Y,\{t\})$
			\begin{align} \nonumber
	& J(x,t) \\ \nonumber
	&\ge -\inf_{ \mbf u \in U_Y} \left\{ -\int_{t}^{T} c(\phi_f(x,s, \mbf u), \mbf u(s),s) ds + g(\phi_f(x,T, \mbf u)) \right\}\\ \nonumber
	& = \sup_{ \mbf u \in U_Y} \left\{ \int_{t}^{T} c(\phi_f(x,s, \mbf u), \mbf u(s),s) ds + g(\phi_f(x,T, \mbf u)) \right\}\\ \nonumber
	& \ge \inf_{ \mbf u \in U_Y} \left\{ \int_{t}^{T} c(\phi_f(x,s,u), \mbf u(s),s) ds + g(\phi_f(x,T, \mbf u)) \right\} \\ \label{proof super value }
	& = V^*(x,t).
	\end{align}
	Therefore it follows by \eqref{proof super value } that $J$ is a super-value function for $\{c,g,f,X_0,Y,T\}$.\end{proof}
Next we give sufficient conditions for the existence of polynomial functions that satisfy Inequalities \eqref{ineq: diss ineq for sub sol of HJB}, \eqref{ineq: BC}, \eqref{ineq: super diss ineq for sub sol of HJB} and \eqref{ineq: super BC}. This proves the existence of polynomial sub-value and super-value functions but does not show that such functions can arbitrarily well approximate the true value function.

\begin{lem} \label{lem: existence of poly dissapating function}	
	For $T>0$; a compact set $Y \subset \R^m$; a compact set $X_0 \subseteq \R^n$; a polynomial function $g: \R^n \to \R$; a function $c \in C^{1}(\R^n \times \R^m \times \R)$; and $f \in \mcl F_Y$; suppose the set $FR_f(X_0,Y,[0,T])$ is bounded. Then there exists a polynomial sub-value function and polynomial super-value function to the optimal control problem $\{c,g,f,X_0,Y,T\}$.
\end{lem}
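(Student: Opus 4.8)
The plan is to reduce the statement to Proposition \ref{prop: diss ineq implies lower soln}: it suffices to exhibit a polynomial $J \in C^1(\R^n \times \R)$ together with a compact set $X_c \supseteq FR_f(X_0,Y,[0,T])$ such that $J$ satisfies the dissipation inequality \eqref{ineq: diss ineq for sub sol of HJB} and the terminal inequality \eqref{ineq: BC} (for the sub-value function), respectively \eqref{ineq: super diss ineq for sub sol of HJB}--\eqref{ineq: super BC} (for the super-value function). Since $FR_f(X_0,Y,[0,T])$ is assumed bounded, I would fix $R>0$ large enough that $FR_f(X_0,Y,[0,T]) \subseteq \bar{B}_R := \{x \in \R^n : x^T x \le R^2\} =: X_c$, which is compact.

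For the sub-value function I would use the ansatz $J(x,t) := g(x) - K(T-t)$ with a constant $K > 0$ to be chosen. This $J$ is a polynomial (as $g$ is a polynomial), and $\nabla_t J(x,t) = K$, $\nabla_x J(x,t) = \nabla_x g(x)$, so the left-hand side of \eqref{ineq: diss ineq for sub sol of HJB} equals $K + c(x,u,t) + \nabla_x g(x)^T f(x,u)$. The map $(x,u,t) \mapsto c(x,u,t) + \nabla_x g(x)^T f(x,u)$ is continuous on the compact set $X_c \times Y \times [0,T]$ — here I use that $c \in C^1$, that $g$ is a polynomial so $\nabla_x g$ is continuous, and that $f_i \in C^1(\R^n)$ since $f \in \mcl F_Y$ — hence it attains a finite minimum, say $-M$, on this set. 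Choosing any $K \ge M$ makes \eqref{ineq: diss ineq for sub sol of HJB} hold on $X_c \times Y \times [0,T]$, while \eqref{ineq: BC} holds with equality because $J(x,T) = g(x)$. Proposition \ref{prop: diss ineq implies lower soln} then yields that $J$ is a polynomial sub-value function of $\{c,g,f,X_0,Y,T\}$.

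The super-value case is entirely symmetric: take $J(x,t) := g(x) + K(T-t)$, so $\nabla_t J = -K$ and the left-hand side of \eqref{ineq: super diss ineq for sub sol of HJB} becomes $-K + c(x,u,t) + \nabla_x g(x)^T f(x,u)$; bound $c + \nabla_x g^T f$ above by a finite constant $M'$ on the same compact set, pick $K \ge M'$, and observe $J(x,T) = g(x)$ gives \eqref{ineq: super BC} with equality, so Proposition \ref{prop: diss ineq implies lower soln} delivers a polynomial super-value function. I do not anticipate a serious obstacle: the only two points needing care are (i) choosing $X_c$ compact, which is precisely what the boundedness hypothesis on $FR_f(X_0,Y,[0,T])$ supplies, and (ii) noticing that the affine-in-$t$ term $\mp K(T-t)$ injects exactly the slack needed to dominate the HJB residual while leaving the terminal value equal to $g$; the remainder is a routine application of the extreme value theorem.
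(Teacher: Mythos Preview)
Your proposal is correct and follows essentially the same approach as the paper: both use the polynomial ansatz $J(x,t)=g(x)+\text{const}\cdot(T-t)$, choose the constant from the extremum of $c(x,u,t)+\nabla_x g(x)^T f(x,u)$ over a ball containing $FR_f(X_0,Y,[0,T])$ (times $Y\times[0,T]$), and then invoke Proposition~\ref{prop: diss ineq implies lower soln}. Your write-up is in fact slightly cleaner in that you explicitly take the \emph{closed} ball $\bar B_R$ as $X_c$ so that the extreme value theorem applies directly.
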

\begin{proof}
As $FR_f(X_0,Y,[0,T])$ is bounded it follows there exists $R>0$ such that $FR_f(X_0,Y,[0,T]) \subset B_R$. Now consider the polynomial function
		\begin{equation*}
		J_1(x,t)= g(x) + \underline{\alpha}(T-t),
		\end{equation*}
		where $\underline{\alpha}:=\inf_{x \in B_R, u \in Y, t \in[0,T]} \{ \nabla g(x) f(x,u) + c(x,u,t) \}$; which is well defined as the infimum of a differentiable function over a compact set is finite.
		
		To prove the existence of a polynomial sub-value function we show $J_1$ satisfies Inequalities \eqref{ineq: diss ineq for sub sol of HJB} and \eqref{ineq: BC}, and thus by Proposition \ref{prop: diss ineq implies lower soln} we deduce $J_1$ is a sub-value function for $\{c,g,f,X_0,Y,T\}$. Trivially \eqref{ineq: BC} holds. Now for $x \in FR_f(X_0,Y,[0,T])$ and $t \in [0,T]$
		\begin{align*}
		& \nabla_t J_1 (x,t) + c(x,u,t) + \nabla_x J_1(x,t)^T f(x,u) \\
		& = - \underline{\alpha} + c(x,u,t) + \nabla g(x)^T f(x,u) \\
		& \ge \inf_{x \in FR_f(X_0,Y,[0,T]), u \in Y, t \in[0,T]} \{c(x,u,t) + \nabla g(x)^T f(x,u)\} - \underline{\alpha}\\
		& \ge 0.
		\end{align*}
		Therefore we conclude $J_1$ satisfies \eqref{ineq: diss ineq for sub sol of HJB} and thus is a sub-value function to $\{c,g,f,X_0,Y,T\}$.
		
		 The existence of a super-value function follows by a similar argument and consideration of the function
		 \begin{align*}
		 J_1(x,t)= g(x) + \bar{\alpha}(T-t),
		 \end{align*}
		 where $\underline{\alpha}:=\sup_{x \in B_R, u \in Y, t \in[0,T]} \{ \nabla g(x) f(x,u) + c(x,u,t) \}$.
	\end{proof}

\section{Using SOS To Construct Sub-Value And Super-Value Functions} \label{sec: SOS to solve optimal control}
For an optimal control problem $\{c,g,f,X_0,Y,T\}$ we would like to find the associated polynomial sub-value and super-value functions with minimum distance under some function metric; and hence are ``close" to a true value function. If we choose our function metric as the $L_1$ norm we seek to solve the optimization problem:
\begin{align*}
&\min_{V_u,V_l \in \mcl{P}_d[\R^n \times \R]} \left\{ \int_{\Omega} V_u(x,t_0)-V_l(x,t_0) dx  \right\}\\
& V_u(x,t) \ge V^*(x,t)\\
& V_l(x,t) \le V^*(x,t),
\end{align*}
where $V^*$ is a value function of $\{c,g,f,X_0,Y,T\}$. To enforce the constraints of the above optimization problem we use Proposition \ref{prop: diss ineq implies lower soln}; where it was shown if $V_l$ satisfies \eqref{ineq: BC} \eqref{ineq: diss ineq for sub sol of HJB} and $V_u$ satisfies \eqref{ineq: super BC} \eqref{ineq: super diss ineq for sub sol of HJB} then $V_l$ and $V_u$ are sub-value and super-value functions for $\{c,g,f,X_0,Y,T\}$ respectively. We then are able to tighten the optimization problem to an SOS optimization problem, indexed by $S(T,c,g,f,h_X,h_Y,d,\Omega)$:
\begin{align} \label{opt: SOS for sub soln of finite time}
&\min_{V_u,V_l \in \mcl{P}_d[\R^n \times \R]} \left\{ \int_{\Omega} V_u(x,t_0)-V_l(x,t_0) dx  \right\}\\ \nonumber
& \text{subject to: } k_{0,l},k_{1,l},k_{0,u},k_{1,u} \in \sum_{SOS}\\ \nonumber
&  s_{i,l},s_{i,u}\in \sum_{SOS} \text{ for } i=0,1,2,3
\end{align}
where
\begin{align*}
 k_{0,l}(x)& = (g(x)-V_l(x,T)) - s_{0,l}(x)h_X(x),\\
 k_{1,l}(x,u,t)& = \left( \nabla_t V_l(x,t) +c(x,u,t) + \nabla_x V_l(x,t)^T f(x,u)  \right) \\
&\quad -s_{1,l}(x,u,t)h_X(x) -s_{2,l}(x,u,t)h_Y(u) \\
& \qquad- s_{3,l}(x,u,t) (t)(T-t),\\
 k_{0,u}(x) & = (V_u(x,T) - g(x)) - s_{0,u}(x)h_X(x),\\
 k_{1,u}(x,u,t) & = -\left( \nabla_t V_l(x,t) +c(x,u,t) + \nabla_x V_u(x,t)^T f(x,u)  \right) \\
& \quad -s_{1,u}(x,u,t)h_X(x) -s_{2,u}(x,u,t)h_Y(u) \\
& \qquad - s_{3,u}(x,u,t) (t)(T-t).
\end{align*}

\begin{cor} \label{cor: SOS is solved by sub solution}
	Suppose $V_u$ and $V_l$ solve $S(T,c,g,f,h_X,h_Y,\Omega)$, given in \eqref{opt: SOS for sub soln of finite time}. Then $V_u$ and $V_l$ are super-value and sub-value functions to the optimal control problem $\{c,g,f,X_0,Y,T\}$ respectively; where $Y \subset \R^m$ and $X_0 \subset \R^n$ are such that $FR_f(X_0,Y,[0,T]) \subseteq \{x \in \R^n : h_X(x) \ge 0\}$ and $Y \subseteq \{u \in \R^m : h_Y(u) \ge 0\}$.
	
	Moreover if $\Omega \subseteq X_0$ the following holds,
	\begin{align} \label{error bounds}
	||V^*(\cdot,0) - V_l(\cdot,0)||_{1, \Omega} \le \eps \text{ and } ||V^*(\cdot,0) - V_u(\cdot,0)||_{1, \Omega} \le \eps,
	\end{align}
	where $\eps=\int_{\Omega} V_u(x,0)-V_l(x,0) dx$ and $V^*$ is the value function of the optimal control problem $\{c,g,f,X_0,Y,T\}$.
\end{cor}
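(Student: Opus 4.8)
The plan is to read the SOS constraints of $S(T,c,g,f,h_X,h_Y,\Omega)$, i.e. of \eqref{opt: SOS for sub soln of finite time}, as a Positivstellensatz‑type tightening of the dissipation inequalities \eqref{ineq: diss ineq for sub sol of HJB}--\eqref{ineq: super BC} on the semialgebraic set $X_c := \{x \in \R^n : h_X(x) \ge 0\}$, invoke Proposition \ref{prop: diss ineq implies lower soln} to conclude that $V_l$ and $V_u$ are sub- and super-value functions of $\{c,g,f,X_0,Y,T\}$, and finally integrate the resulting sandwich $V_l(\cdot,0) \le V^*(\cdot,0) \le V_u(\cdot,0)$ over $\Omega$ to obtain \eqref{error bounds}. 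For the first claim, since $k_{0,l},s_{0,l} \in \sum_{SOS}$, for every $x \in X_c$ we have $k_{0,l}(x) \ge 0$ and $s_{0,l}(x)h_X(x) \ge 0$, hence $g(x) - V_l(x,T) = k_{0,l}(x) + s_{0,l}(x)h_X(x) \ge 0$, which is exactly \eqref{ineq: BC} on $X_c$. Likewise, for $x \in X_c$, $u \in Y \subseteq \{u : h_Y(u) \ge 0\}$ and $t \in [0,T]$ (so that $t(T-t) \ge 0$), each of the three subtracted terms in $k_{1,l}$ is a product of an SOS polynomial with a nonnegative quantity, so $\nabla_t V_l(x,t) + c(x,u,t) + \nabla_x V_l(x,t)^T f(x,u) = k_{1,l}(x,u,t) + s_{1,l}h_X(x) + s_{2,l}h_Y(u) + s_{3,l}\,t(T-t) \ge 0$, which is \eqref{ineq: diss ineq for sub sol of HJB}. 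Since $FR_f(X_0,Y,[0,T]) \subseteq X_c$ by hypothesis, Proposition \ref{prop: diss ineq implies lower soln} applies and yields that $V_l$ is a sub-value function of $\{c,g,f,X_0,Y,T\}$.

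The argument for $V_u$ is the mirror image. From $k_{0,u} \ge 0$ and $s_{0,u}h_X \ge 0$ on $X_c$ one gets $V_u(x,T) - g(x) \ge 0$, which is \eqref{ineq: super BC}; and from $k_{1,u} \ge 0$, reading $\nabla_t V_u$ for the $\nabla_t V_l$ that appears in the displayed definition of $k_{1,u}$, one gets $-\bigl(\nabla_t V_u(x,t) + c(x,u,t) + \nabla_x V_u(x,t)^T f(x,u)\bigr) \ge 0$ on $X_c \times Y \times [0,T]$, i.e. \eqref{ineq: super diss ineq for sub sol of HJB}. The second half of Proposition \ref{prop: diss ineq implies lower soln} then gives that $V_u$ is a super-value function of $\{c,g,f,X_0,Y,T\}$, completing the first claim.

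For the error bound, observe that $\phi_f(x,0,\mbf u) = x$ for all $x \in \R^n$ and $\mbf u \in U_Y$, so $FR_f(X_0,Y,\{0\}) = X_0$; hence the assumption $\Omega \subseteq X_0$ places $\Omega$ inside the set on which the inequalities of Definition \ref{defn: sub and super value functions} hold at $t=0$, giving $V_l(x,0) \le V^*(x,0) \le V_u(x,0)$ for all $x \in \Omega$. Consequently $0 \le V^*(x,0) - V_l(x,0) \le V_u(x,0) - V_l(x,0)$ and $0 \le V_u(x,0) - V^*(x,0) \le V_u(x,0) - V_l(x,0)$ pointwise on $\Omega$; in particular the objective integrand is nonnegative, so $\eps = \int_\Omega V_u(x,0) - V_l(x,0)\,dx \ge 0$ is well defined, and integrating over $\Omega$ gives $||V^*(\cdot,0) - V_l(\cdot,0)||_{1,\Omega} = \int_\Omega (V^*(x,0) - V_l(x,0))\,dx \le \eps$ and, in the same way, $||V^*(\cdot,0) - V_u(\cdot,0)||_{1,\Omega} \le \eps$, which is \eqref{error bounds}. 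Essentially all of this is bookkeeping; the only substantive points are recognizing that the SOS certificates are precisely a tightening of the hypotheses of Proposition \ref{prop: diss ineq implies lower soln} on $X_c$ (so the sign/direction of the relaxation must be tracked carefully), and the domain observation $FR_f(X_0,Y,\{0\}) = X_0$ that lets the sandwich inequality be integrated on $\Omega$ with $t_0 = 0$. I do not anticipate any genuine obstacle.
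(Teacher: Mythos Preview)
Your proposal is correct and follows essentially the same route as the paper: you read the SOS constraints as Positivstellensatz certificates for the dissipation inequalities \eqref{ineq: diss ineq for sub sol of HJB}--\eqref{ineq: super BC} on $X_c=\{x:h_X(x)\ge 0\}$, invoke Proposition~\ref{prop: diss ineq implies lower soln}, and then integrate the sandwich $V_l\le V^*\le V_u$ over $\Omega$. Your explicit observation that $FR_f(X_0,Y,\{0\})=X_0$ (justifying the sandwich on $\Omega\subseteq X_0$ at $t=0$) and your reading of $\nabla_t V_u$ for the stray $\nabla_t V_l$ in the definition of $k_{1,u}$ are both correct and simply make precise what the paper leaves implicit.
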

\begin{proof}
	We first prove $V_l$ is a sub-value function by showing $V_l$ satisfies the dissipation inequalities \eqref{ineq: diss ineq for sub sol of HJB} and \eqref{ineq: BC}; as it follows by Proposition \ref{prop: diss ineq implies lower soln} that such a function must be a sub-value function of $\{c,g,f,X_0,Y,T\}$.
	
As $k_{0,l} \in \sum_{SOS}$ it follows $k_{0,l}(x) \ge 0$ for all $x \in \R^n$. Moreover since a positive function multiplied by a positive function is a postive function we furthermore deduce
\begin{align*}
V_l(x,T) \le g(x) \quad \forall x \in \{y \in \R^n: h_X(y) \ge 0\}.
\end{align*}
As $FR_f(X_0,Y,[0,T]) \subseteq \{x \in \R^n : h_X(x) \ge 0\}$ the above inequality also holds for all $x \in FR_f(X_0,Y,[0,T])$. Therefore $V_l$ satisfies Inequality \eqref{ineq: BC}.

As $k_{1,l} \in \sum_{SOS}$ it follows for all $x \in \{y \in \R^n: h_X(y) \ge 0\}$, $u \in \{w \in \R^m: h_Y(w) \ge 0\}$, and $t \in \{s \in \R: [s][T-s] \ge 0\}$
\begin{align*}
\nabla_t V_l(x,t) +c(x,u,t) + \nabla V_l(x,t)^T f(x,u) \ge 0.
\end{align*}
As $FR_f(X_0,Y,[0,T]) \subseteq \{x \in \R^n : h_X(x) \ge 0\}$, $Y \subseteq \{w \in \R^m: h_Y(w) \ge 0\}$ and $[0,T]=\{s \in \R: [s][T-s] \ge 0\}$ it follows $V_l$ satisfies Inequality \eqref{ineq: diss ineq for sub sol of HJB}. Therefore we conclude $V_l$ is a sub-value function as it satisfies the Inequality \eqref{ineq: diss ineq for sub sol of HJB} and \eqref{ineq: BC}. Moreover, it follows by a similar argument to the above that $V_u$ is a super-value function.

Finally the error bounds in \eqref{error bounds} immediately follows using $V_l(x,0) \le V^*(x,0) \le V_u(x,0)$ for all $x \in \Omega \subseteq X_0$ and $t \in [0,T]$. \end{proof}

In Lemma \ref{lem: under and over approx of reachable set} we saw how sub-value and super-value functions over- and inner-bound reachable sets. In the next corollary we will show how solutions to the SOS Optimization Problem \eqref{opt: SOS for sub soln of finite time} also over and inner bound reachable sets.

\begin{cor} \label{cor: SOS for reachable set estimation}
	Suppose $V_u$ and $V_l$ solve $S(T,0,g,f,h_X,h_Y,\Omega)$, given in \eqref{opt: SOS for sub soln of finite time}. Let $Y = \{u \in \R^m : h_Y(u) \ge 0\}$ and $X_0=\{x \in \R^n: g(x) \le 1 \}$. Suppose for some $X \subset \R^n$ such that $BR_{f}(X_0,Y,\{T\}) \subseteq X$ the following holds $FR_f(X,Y,[0,T]) \subseteq \{x \in \R^n : h_X(x) \ge 0\}$. Then
		\begin{align} \label{sos set containments}
	\{x \in X: V_u(x,0) \le 1\} &  \subseteq BR_{f}(X_0,Y,\{T\})\\ \nonumber
	BR_{f}(X_0,Y,\{T\}) & \subseteq \{x \in X : V_l(x,0) \le 1\}.
	\end{align}
\end{cor}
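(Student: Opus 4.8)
The plan is to reduce Corollary~\ref{cor: SOS for reachable set estimation} directly to the two results already established: Corollary~\ref{cor: SOS is solved by sub solution}, which tells us that solutions to the SOS program are sub-value and super-value functions, and Lemma~\ref{lem: under and over approx of reachable set}, which tells us that sub-value and super-value functions give the desired inner and outer bounds on the backward reachable set. So the argument is essentially a matter of checking that the hypotheses of both of these results are satisfied under the hypotheses stated here.

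First I would observe that we are running the SOS program $S(T,0,g,f,h_X,h_Y,\Omega)$, i.e.\ with running cost $c \equiv 0$ and terminal cost $g$. By hypothesis $X_0 = \{x \in \R^n : g(x) \le 1\}$, $Y = \{u \in \R^m : h_Y(u) \ge 0\}$, and there is a set $X$ with $BR_f(X_0,Y,\{T\}) \subseteq X$ and $FR_f(X,Y,[0,T]) \subseteq \{x \in \R^n : h_X(x) \ge 0\}$. The point is that Corollary~\ref{cor: SOS is solved by sub solution} applies \emph{with $X$ in place of the generic initial-condition set}: its hypotheses require exactly that $FR_f(X,Y,[0,T]) \subseteq \{x : h_X(x) \ge 0\}$ and $Y \subseteq \{u : h_Y(u) \ge 0\}$, both of which hold here. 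Hence $V_u$ and $V_l$ are super-value and sub-value functions, respectively, for the optimal control problem $\{0,g,f,X,Y,T\}$.

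Next I would invoke Lemma~\ref{lem: under and over approx of reachable set}, again with the set $X$ playing the role of the ambient set in that lemma. Its hypotheses are: $X_0 = \{x : g(x) \le 1\}$ (given), $BR_f(X_0,Y,\{T\}) \subseteq X$ (given), and that $V_l$, $V_u$ are sub-value and super-value functions to $\{0,g,f,X,Y,T\}$ (just established). The conclusion of the lemma is precisely
\begin{align*}
\{x \in X : V_u(x,0) \le 1\} &\subseteq BR_f(X_0,Y,\{T\}),\\
BR_f(X_0,Y,\{T\}) &\subseteq \{x \in X : V_l(x,0) \le 1\},
\end{align*}
which is the pair of containments \eqref{sos set containments} that we must prove. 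That completes the argument.

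The only subtlety — and hence the step I would be most careful about — is the bookkeeping of which set plays which role. In Corollary~\ref{cor: SOS is solved by sub solution} the set appearing as ``$X_0$'' is the set over which the forward reachable tube must be contained in $\{h_X \ge 0\}$, whereas here that set is the strictly larger $X$, and the genuine initial-condition set $X_0 = \{g \le 1\}$ is something else. One must check that using $X$ everywhere is consistent: it is, because $BR_f(X_0,Y,\{T\}) \subseteq X$ is exactly the hypothesis Lemma~\ref{lem: under and over approx of reachable set} needs, and the reachability of points of $X$ only strengthens the containment $FR_f(X,Y,[0,T]) \subseteq \{h_X \ge 0\}$ relative to $FR_f(X_0,Y,[0,T])$. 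There is no real analytic obstacle; the ``hard part'' is merely to state the invocation of the two prior results with the correct set substituted in each slot.
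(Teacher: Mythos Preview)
Your proposal is correct and matches the paper's own proof essentially line for line: the paper's proof consists of exactly the two steps you describe --- apply Corollary~\ref{cor: SOS is solved by sub solution} with $X$ as the initial-condition set to get that $V_u,V_l$ are super-/sub-value functions of $\{0,g,f,X,Y,T\}$, then invoke Lemma~\ref{lem: under and over approx of reachable set}. Your extra paragraph on the bookkeeping of $X$ versus $X_0$ is a helpful elaboration of what the paper leaves implicit, but there is no difference in approach.
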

\begin{proof}
	By Corollary \ref{cor: SOS is solved by sub solution} the functions $V_u$ and $V_l$ are super-value and sub-value functions to the optimal control problem $\{0,g,f,X,Y,T\}$ where $BR_{f}(X_0,Y,\{T\}) \subseteq X$. Therefore by Lemma \ref{lem: under and over approx of reachable set} the set containments \eqref{sos set containments} hold.
	\end{proof}
For reachable set analysis using $S(T,0,g,f,h_X,h_Y,d,\Omega)$, given in \eqref{opt: SOS for sub soln of finite time}, typically we select $h_X= R^2 - x_1^2 -x_2^2$ for $R>0$ so $\{x \in \R^n : h_X(x) \ge 0\} = B_R$. Then, assuming the set $BR_f(X_0,f,{T})$ is compact, we select $R>0$ sufficiently large enough for there to exist a compact set $X \subset \R^n$ such that $BR_f(X_0,f,{T}) \subseteq X$ and $FR_f(X,Y,[0,T]) \subseteq B_R$. Knowledge of the set $X \subset \R^n$ is not necessary to construct an outer approximation of the backward reachable set; as by Corollary \ref{cor: SOS for reachable set estimation} we have $BR_{f}(X_0,Y,\{T\}) \subseteq \{x \in X : V_l(x,0) \le 1\} \subseteq \{x \in \R^n: V_l(x,0) \le 1\}$, where $(V_u,V_l)$ solve $S(T,0,g,f,h_X,h_Y,d,\Omega)$.

\subsection{Numerical Example: Using SOS To Numerically Approximating A Non-Differentiable Value Function}

\begin{figure}
	\includegraphics[scale=0.6]{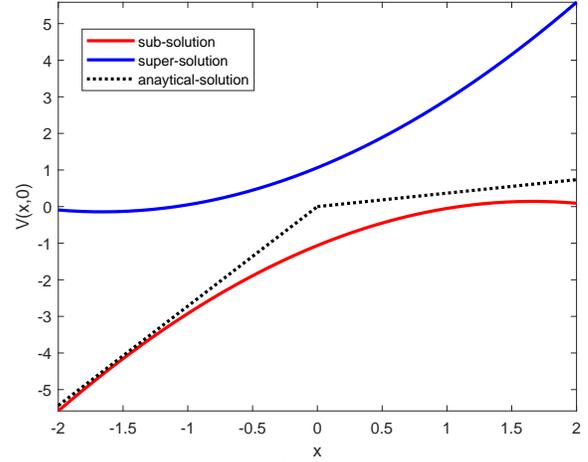}
	\vspace{-25pt}
	\caption{The value function $V(x,t)$, given in \eqref{analytucal soln}, plotted as the dotted black line, along with the sub-value function, plotted as the red line, and super-value function, plotted as the blue line, found by solving the SOS Optimization Problem \eqref{opt: SOS for sub soln of finite time}.}
	\label{fig: finite time sos}
	\vspace{-15pt}
\end{figure}

Let $X_0=[-8,8]$; $T>0$; $Y=[-1,1]$; $c(x,t)=0$ for all $x \in \R$ and $t>0$; $g(x)=x$; $f(x,u)=xu$ and consider the optimal control problem $\{c,g,f,X_0,Y,T\}$. It was shown in \cite{liberzon2011calculus} that the value function of $\{c,g,f,X_0,Y,T\}$ can be analytically found as
	\begin{equation} \label{analytucal soln}
	V(x,t)= \begin{cases} \exp(t-T)x \text{ if } x>0,\\
	\exp(T-t)x \text{ if } x<0,\\
	0 \text{ if } x=0. \end{cases}
	\end{equation}
	We note that $V$ is not differentiable at $x=0$ but can be shown to satisfy the associated HJB PDE away from $x=0$. This problem shows how the value function can be non-smooth even for simple optimal control problems with polynomial vector field and cost. We next attempt to find a polynomial, and thus smooth, super-value and sub-value functions of this optimal control problem that is close to the non-smooth value function given in \eqref{analytucal soln} under the $L_1$ norm.
	
	We numerically solved the SOS optimization problem $S(T,c,g,f,h_X,h_Y,d,\Omega)$ with $T=1$; $c$, $g$ and $f$ the same as the above optimal control problem; $h_X=8^2 - x^2$; $h_Y(u)=(-1-u)(u-1)$; $d=4$; $\Omega=[-2,2]$. The result is displayed in Figure \ref{fig: finite time sos} where the exact value function, given in \eqref{analytucal soln}, is plotted as the dotted line and super-value and sub-value functions are plotted as the blue and red line respectively. We see even though the exact value function is discontinuous at $x=0$ the smooth polynomial sub-value is a reasonable tight approximation.

\subsection{Numerical Examples: Using SOS To Solve The HJB PDE For Reachable Set Approximation}

\begin{figure}
	\includegraphics[scale=0.6]{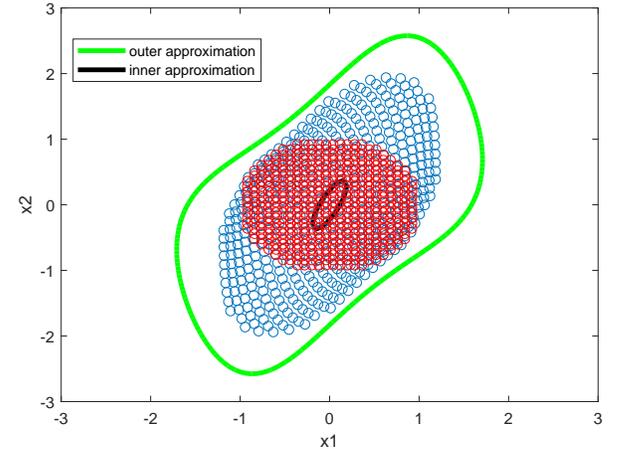}
			\vspace{-20pt}
	\caption{The 1 sublevel set at $t=0$ of the sub-value function, the green curve, and super-value function, the black curve, found by solving the SOS optimization problem \eqref{opt: SOS for sub soln of finite time} for the ODE \eqref{eqn: van der pol ode}. The blue points show the solution map of the ODE \eqref{eqn: van der pol ode} at $T=1$ starting from initial conditions shown as the red points.}
	\label{fig: van }
		\vspace{-20pt}
\end{figure}

\begin{ex}
	Let us now consider the Van der Pol oscillator defined by the nonlinear ODE:
	\begin{align} \label{eqn: van der pol ode}
	\dot{x}_1(t) & = x_2(t)\\ \nonumber
	\dot{x}_2(t) & = -x_1(t) +  x_2(t)(1- x_1^2(t)),
	\end{align}
	
	
	To find the forward reachable set for the Van der Pol oscillator we solved the optimization problem $S(T,c,g,f,h_X,h_Y,d,\Omega)$, found in \eqref{opt: SOS for sub soln of finite time}, with $T=1$; $c=0$; $g(x)=x_1^2 + x_2^2$; $f(x)= -[x_2, -x_1 +  x_2(1-x_1^2)]^T$; $h_X(x)= 10^2 - x_1^2 - x_2^2$; $h_Y(u)=0$; $d=4$ and $\Omega=[-2,2]\times [-2,2]$. The sublevel sets $\{x \in \R^n: V_u(x,0) \le 1\}$ and $\{x \in \R^n: V_l(x,0) \le 1\}$, where $(V_u,V_l)$ solve the above optimization problem, are then plotted in Figure \ref{fig: van } as the black line and green line respectively. As shown in Corollary \ref{cor: SOS for reachable set estimation} these sublevel sets are over and under set approximations of $BR_{f}(X_0,Y,\{T\})$, which was shown to be equal to $FR_{-f}(X_0,Y,\{T\})$ in Lemma \ref{lem: backward and forward reach sets}, where $X_0=\{x \in \R^n: g(x) \le 1\}$. This is clearly demonstrated in Figure \ref{fig: van } where the red points represent initial points contained inside the set $X_0$ and blue points represent points the solution map can transition to at time $T=1$ starting in $X_0$; where both sets of points were approximately found from forward time integrating \eqref{eqn: van der pol ode}.
\end{ex}

\begin{ex}
	Let us consider the linear ODE:
	\begin{equation} \label{ODE: linear}
	\dot{x}(t) = \mbf u(t)Ax(t),
	\end{equation}
	where $A = \begin{bmatrix}
	0 &-1 \\ 1 & 0
	\end{bmatrix}$. Since the eigenvalues of $A$ are $ \pm i$ it follows \eqref{ODE: linear} produces non-stable circular trajectories for fixed input $\mbf u(t) \equiv u \in \R^m$.
	
	To find the forward reachable set for this linear ODE \eqref{ODE: linear} for fixed input $\mbf u(t)\equiv1$ we solved the optimization problem $S(T,c,g,f,h_X,h_Y,d,\Omega)$, found in \eqref{opt: SOS for sub soln of finite time}, for both $d=4$ and $d=4$ with $T=5$; $c=0$; $g(x)=(x_1-1.5)^2 + x_2^2$; $f(x)= -Ax$; $h_X(x)= 10^2 - x_1^2 - x_2^2$; $h_Y(u)=0$; and $\Omega=[-3,3]\times [-3,3]$. We plotted the 1-sublevel sets at time $0$ of the solutions to these optimization problem, $V_u$ and $V_l$, in Figure \ref{fig: linear d=3} as the black line and green line respectively; where the dotted lines are for $d=3$ and filled lines for $d=4$. Here the red points represent initial points contained inside the set $X_0=\{x \in \R^n: g(x) \le 1\}$ and blue points represent points the solution map can transition to at time $T=5$ starting in $X_0$; where both sets of points were approximately found from forward time integrating \eqref{ODE: linear}. As expected, by Corollary \ref{cor: SOS for reachable set estimation}, we see these sublevel sets under and over approximate the reachable set respectively. We also see increasing the degree makes our approximations tighter.
	
	We have furthermore approximated the forward reachable set of the linear ODE \eqref{ODE: linear} when the input is allowed to vary but constrained inside the set $Y=[-2,2]$. To do this we solved the optimization problem $S(T,c,g,f,h_X,h_Y,d,\Omega)$, found in \eqref{opt: SOS for sub soln of finite time}, with $T=0.5$; $c=0$; $g(x)=(x_1-1.5)^2 + x_2^2$; $f(x)= -Ax$; $h_X(x)= 4^2 - x_1^2 - x_2^2$; $h_Y(u)=(u+2)(2-u)$; $d=2$ and $\Omega=[-3,3]\times [-3,3]$. In Figure \ref{fig: input } we then plotted $\{x \in \R^2: V_l(x,0) \le 1\}$ as the green line, where $(V_u,V_l)$ solves the above optimization problem. By Corollary \ref{cor: SOS for reachable set estimation} the set  $\{x \in \R^2: V_l(x,0) \le 1\}$ over approximates the set $BR_{f}(X_0,Y,\{T\})$, shown in Lemma \ref{lem: backward and forward reach sets} to be equal to $FR_{-f}(X_0,Y,\{T\})$, where $X_0=\{x \in \R^n: g(x) \le 1\}$. This is demonstrated in Figure \ref{fig: input } as the terminal points of the solution map at time $T=0.5$, represented by the blue points, are all contained inside the green line.
	
	\begin{figure}
		\includegraphics[scale=0.6]{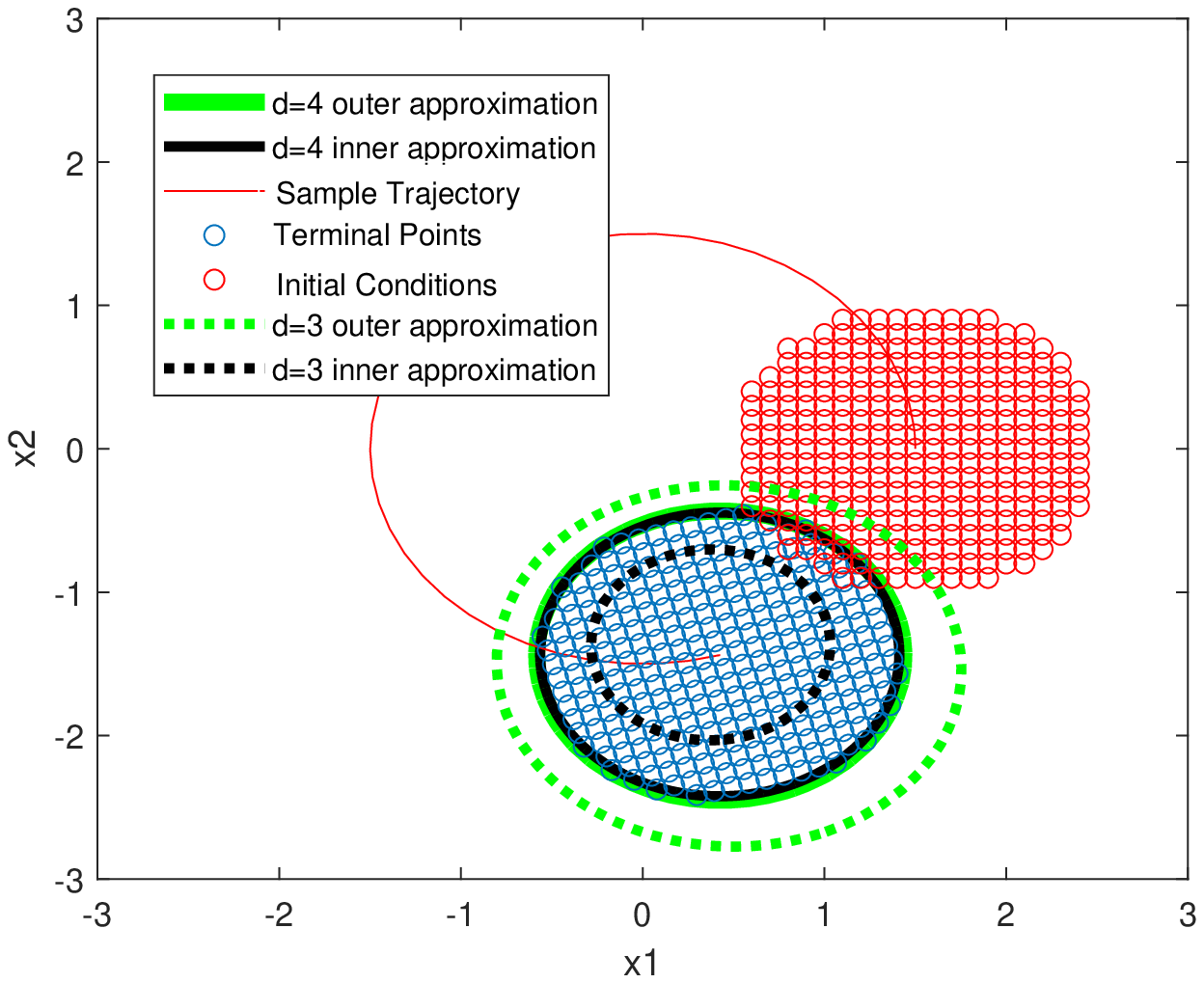}
		\vspace{-20pt}
		\caption{The 1 sublevel set at $t=0$ of the sub-value function, the green curve, and super-value function, the black curve, found by solving the SOS optimization problem \eqref{opt: SOS for sub soln of finite time} for the ODE \eqref{ODE: linear} with $\mbf u(t)\equiv1$ for $d=3,4$. The blue points show the solution map of the ODE \eqref{ODE: linear} at $T=5$ starting from initial conditions, shown as red points.}
		\label{fig: linear d=3}
			\vspace{-20pt}
	\end{figure}
	
	\begin{figure}
		\includegraphics[scale=0.6]{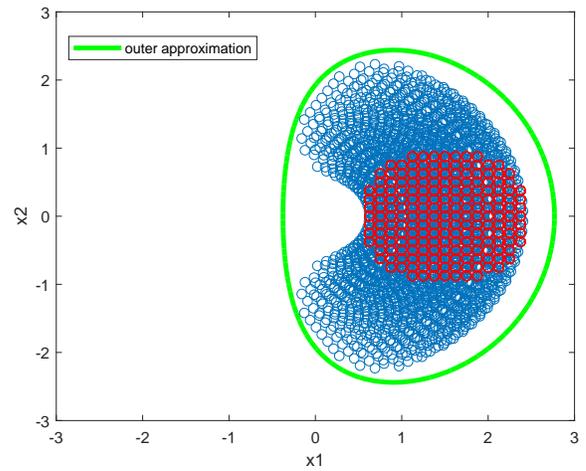}
		\vspace{-20pt}
		\caption{The 1 sublevel set at $t=0$ of the sub-value function, the green curve, found by solving the SOS optimization problem \eqref{opt: SOS for sub soln of finite time} for the ODE \eqref{ODE: linear} with $\mbf u \in U_Y$ and $Y=[-2,2]$. The blue points show the solution map of the ODE \eqref{ODE: linear} at $T=0.5$ starting from initial conditions, shown as red points, for various inputs $\mbf u \in U_Y$.}
		\label{fig: input }
			\vspace{-15pt}
	\end{figure}
\end{ex}

\section{Conclusion} \label{sec: conclusion}
In this paper we have shown if a function satisfies dissipation inequalities then it is a sub-value or super-value function to an optimal control problem. Further to this we have given sufficient conditions for the existence of polynomial sub-value and super-value functions to optimal control problems. An SOS optimization problem was proposed that is solved by sub-value and super-value functions of an optimal control problem that have minimum $L_1$ norm. It was shown how this SOS optimization problem is able to construct outer and inner set approximations of reachable sets.

\section*{Acknowledgements}
This work was supported by the National Science Foundation under grants No. 1538374 and 1739990.

\bibliographystyle{unsrt}
\bibliography{Jones_CDC_2019_v4}

\end{document}